\documentclass[a4paper,11pt]{scrartcl}
\usepackage{nccmath,amsthm,mathtools}
\usepackage[mathscr]{euscript}
\usepackage{tikz}
\usepackage{dsfont}
\usepackage{color}
\usepackage{subfig}
\usepackage{xcolor}

\usepackage[utopia]{mathdesign} 
\usepackage[mathscr]{euscript}
\usepackage{microtype}

\numberwithin{equation}{section}

\definecolor{light}{gray}{.75}

\newcommand{\Argmin}{\mathrm{Argmin}}
\newcommand{\zero}{\mathbf{0}}
\newcommand{\R}{\mathds{R}}
\newcommand{\co}{\mathrm{co}}
\newcommand{\sign}{\mathrm{sign}}
\newcommand{\super}{\overline{\partial}}
\newcommand{\sub}{\underline{\partial}}
\renewcommand{\S}{\mathscr{S}} 
\newcommand{\U}{\mathscr{U}} 

\newcommand{\A}{\mathscr{A}}
\newcommand{\B}{\mathscr{B}}
\newcommand{\C}{\mathscr{C}}
\newcommand{\PP}{\mathscr{P}}

\newtheorem{theorem}{Theorem}[section]
\newtheorem{proposition}{Proposition}[section]
\newtheorem{corollary}{Corollary}[section]

\newtheorem{lemma}{Lemma}[section]

\theoremstyle{definition}
\newtheorem{remark}{Remark}[section]
\newtheorem{definition}{Definition}[section]
\newtheorem{example}{Example}[section]

\def\clap#1{\hbox to 0pt{\hss#1\hss}}
\def\mathclap{\mathpalette\mathclapinternal}
\def\mathclapinternal#1#2{%
  \clap{$\mathsurround=0pt#1{#2}$}%
}
\def\mathrlap{\mathpalette\mathrlapinternal}
\def\mathrlapinternal#1#2{%
  \rlap{$\mathsurround=0pt#1{#2}$}
}

  \title{Characterization theorem for best polynomial spline approximation with free knots}
  \author{Nadezda Sukhorukova \and Julien Ugon}

\begin{document}
\maketitle
  \begin{abstract}
   In this paper, we derive a necessary condition for a best approximation by
   piecewise polynomial functions. We apply nonsmooth nonconvex analysis to
   obtain this result, which is also a necessary and sufficient condition for
   inf-stationarity in the sense of Demyanov-Rubinov. We start from identifying
   a special property of the knots. Then, using this property, we construct a
   characterization theorem for best free knots polynomial spline approximation,
   which is stronger than the existing characterisation results when only
   continuity is required.
  \end{abstract}

  \section{Introduction}\label{sec:introduction}

    The problem of approximating a continuous function by a piecewise
    polynomial (polynomial spline) has been studied for over four
    decades~\cite{Schumaker68}; yet, when the knots joining the polynomials are
    also variable, finding conditions for a best Chebyshev approximation
    remains an open problem~\cite[problem~1]{FreeKnotsOpenProblem96}. We derive
    a necessary optimality condition for a best approximation which is stronger
    than the existing ones~\cite{nurnberger,Nurnberger892} when only continuity
    is required .

    It is acknowledged in~\cite{FreeKnotsOpenProblem96} that the existing
    optimisation tools are not adapted to this problem, due to its nonconvex
    and nonsmooth nature. Therefore,  our motivation is to apply recently
    developed  nonsmooth optimisation tools~\cite{dr95,Dr00} not
    well-known outside of the optimisation community to improve the
    existing results.

    In this paper we are concentrating on necessary optimality conditions.
    These conditions are important for the development of an algorithm for
    constructing a best polynomial spline approximation, since these conditions
    can be used as stopping criteria. Most local optimisation methods can
    improve a solution that is not locally optimal. Therefore, it is especially
    important to confirm that an iterate is at least locally optimal or use it
    as a starting point for local optimisation.

    The paper is organised as follows. In section~\ref{sec:preliminaries} we
    introduce necessary definitions and relevant results from the area of
    polynomials spline approximation. The last subsection of this section
    provides necessary results from the theory of quasidifferentials, developed
    by Demyanov and Rubinov~\cite{dr95,Dr00}. Some of these techniques allow us
    to overcome the difficulties highlighted in~\cite{FreeKnotsOpenProblem96}.
    After all the necessary preliminaries, we proceed with obtaining the new
    results, formulated in section~\ref{sec:necess-cond-optim}
    (theorem~\ref{thm:main_characterization}). The proof of this result takes
    several steps. First, in section~\ref{sec:nonsmooth-optimization-approach}
    we reformulate our approximation problem as an optimisation problem and
    show that the quasidifferential of its objective function is expressed in
    terms of extreme points.  Second, in
    section~\ref{sec:confined_quasidifferential} we introduce the notion of
    \emph{confined quasidifferential}, a subset of the whole quasidifferential
    containing only the components based on the extreme points from a given
    subinterval.  Third, in section~\ref{sec:chang-vect-basis} we define an
    invertible linear transformation which simplifies the vectors from the sub-
    and superdifferentials.  Fourth, in
    section~\ref{sec:identify_stationary_subintervals} we prove
    proposition~\ref{prop:blocksubintervals}.  Our final step is to show that
    proposition~\ref{prop:blocksubintervals} and
    theorem~\ref{thm:main_characterization} are equivalent. Finally, in
    section~\ref{sec:disc-concl-remarks} we conclude and highlight future research
    directions.

\section{Preliminaries}\label{sec:preliminaries}
    \subsection{Definitions and formulations}\label{sse:definitions}

    \begin{definition}[Polynomial Spline]
      A polynomial spline is a piecewise polynomial. Each polynomial piece lies
      on an interval \([\xi_i,\xi_{i+1}],~i=0,\ldots,N-1\). The points \(\xi_0\)
      and \(\xi_N\) are the \emph{external knots}, and the points \(\xi_i\),
      (\(i=1,\dots,N-1\)) are the \emph{internal knots} of the polynomial spline.
    \end{definition}
    The spline is generally not infinitely differentiable at its knots.
    Denote the set of polynomials of degree \(m\) by \(\PP_m\) and the
    set of piecewise polynomials of degree \(m\) with \(k\) knots by
    \(\PP\PP_{m,k}\).
    \begin{definition}[multiplicity]
      An integer \(m_i\leq m+1\) is
      called the \emph{multiplicity} of the spline at the knot \(\xi_i\) if the
      spline is \(m-m_i\) times continuously differentiable at~\(\xi_i\).
    \end{definition}

    In the case examined in this paper only continuity of
    the spline is required and hence \(m_i=m,~m=1,\dots,N-1\). Therefore we consider the
    problem of finding a best approximation by a continuous piecewise
    polynomial 
    as follows:
    \begin{subequations}
      \begin{gather}
        \label{eq:optimisation_problem}
        \text{minimize } \Psi(s) \text{ subject to } s\in \PP\PP_{m,N} \cap
        C[a,b],\\
        \label{eq:main_objective_function}
        \Psi(s)=\sup_{t\in[\xi_0,\xi_N]}\max\{s(t)-f(t),-s(t)+f(t)\}.
      \end{gather}
    \end{subequations}

      \begin{definition}
        The difference between the spline and the function to approximate is called the
        \emph{deviation}.
      \end{definition}
      We denote the deviation function at point~\(t\) by
      \(\psi_t(s)=\psi(s,t)=s(t)-f(t)\).

      Our aim is to minimize the maximal absolute deviation. This maximal
      deviation occurs at points in the interval \([\xi_0,\xi_N]\) which we call
      \emph{extreme points}.

    \begin{proposition}
      \label{prop:solution_exists}
      If the function \(f\) is continuous,
      problem~\eqref{eq:optimisation_problem} admits a solution
    \end{proposition}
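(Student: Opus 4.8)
The plan is to use the direct method of the calculus of variations: take a minimizing sequence, extract a convergent subsequence, and show that its limit is feasible and attains the infimum. Let $\mu = \inf\{\Psi(s) : s \in \PP\PP_{m,N}\cap C[a,b]\}$, which is finite and nonnegative, and choose a minimizing sequence $(s_k)$ with $\Psi(s_k) \to \mu$ and $\Psi(s_k) \le \mu + 1$ for all $k$. From the definition of $\Psi$ in~\eqref{eq:main_objective_function} and the continuity (hence boundedness) of $f$ on $[\xi_0,\xi_N]$, this yields a uniform bound $\|s_k\|_\infty \le M := \|f\|_\infty + \mu + 1$. Each $s_k$ is determined by its knot vector $\xi^{(k)} = (\xi^{(k)}_1,\dots,\xi^{(k)}_{N-1})$, constrained to lie in the compact ordered simplex $\{\xi_0 \le \xi_1 \le \cdots \le \xi_{N-1}\le \xi_N\}$, together with the polynomial coefficients on each subinterval. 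I would first pass to a subsequence along which $\xi^{(k)} \to \xi^\ast$; in the limit some knots may coalesce, so the distinct values of $\xi^\ast$ partition $[\xi_0,\xi_N]$ into finitely many nondegenerate subintervals (of positive length) together with the collapsing ones.

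The second and central step is to control the polynomial pieces. On each nondegenerate limit subinterval I would fix a compact subinterval of positive length contained in the corresponding pieces $[\xi^{(k)}_{j},\xi^{(k)}_{j+1}]$ for all large $k$; since the polynomials of degree $m$ restricted to a fixed nondegenerate interval form a finite-dimensional space on which the uniform norm is a genuine norm, all norms being equivalent, the bound $\|s_k\|_\infty \le M$ forces the coefficients of these pieces to stay bounded. A further diagonal extraction then produces coefficient limits, and hence a limiting polynomial $p_i^\ast$ on each nondegenerate subinterval, with $s_k \to s^\ast$ uniformly there. Gluing these pieces defines a candidate $s^\ast$; because each $s_k$ is continuous and the knot values converge, the one-sided limits of neighbouring pieces agree at every (possibly merged) knot, so $s^\ast$ is continuous and is a piecewise polynomial of degree $m$ with at most $N$ knots, i.e. $s^\ast \in \PP\PP_{m,N}\cap C[a,b]$.

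The final step is lower semicontinuity of $\Psi$. On each of the finitely many nondegenerate subintervals the uniform convergence $s_k \to s^\ast$ gives $\sup|s^\ast - f| = \lim_k \sup|s_k - f| \le \mu$, and taking the maximum over these subintervals (whose closures cover $[\xi_0,\xi_N]$, the collapsing intervals shrinking to the merged knots) yields $\Psi(s^\ast) \le \mu$. Since $s^\ast$ is feasible we also have $\Psi(s^\ast) \ge \mu$, whence $\Psi(s^\ast) = \mu$ and $s^\ast$ is a solution.

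I expect the main obstacle to be the coalescence of knots: as the lengths of some pieces tend to zero, their polynomial coefficients may blow up, so compactness cannot be obtained directly in any fixed coefficient parametrisation. The remedy, and the crux of the argument, is that the uniform bound $\|s_k\|_\infty \le M$ persists on the collapsing pieces as well, so these pieces disappear into isolated points of the domain in the limit without affecting either feasibility (the limit is still a continuous spline with no more knots than before) or the value of the objective, allowing the whole analysis to be carried out on the nondegenerate pieces alone.
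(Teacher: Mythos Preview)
Your direct-method outline is natural, but the step ``because each $s_k$ is continuous and the knot values converge, the one-sided limits of neighbouring pieces agree at every (possibly merged) knot'' is false as stated, and this is precisely the obstruction. When an intermediate subinterval $[\xi_j^{(k)},\xi_{j+1}^{(k)}]$ collapses to a single point $\xi^\ast$, the uniform bound $\|s_k\|_\infty\le M$ on that piece does \emph{not} force $s_k(\xi_j^{(k)})-s_k(\xi_{j+1}^{(k)})\to 0$. A degree-one polynomial on an interval of length $\varepsilon$ can run from $-M$ to $M$ while staying bounded by $M$. Concretely, on $[-1,1]$ with $m=1$ and two free internal knots, take $\xi_1^{(k)}=-1/k$, $\xi_2^{(k)}=1/k$, $s_k\equiv 0$ on $[-1,-1/k]$, $s_k\equiv c$ on $[1/k,1]$, and $s_k$ linear in between. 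Then $\|s_k\|_\infty=|c|$, the outer pieces converge to the constants $0$ and $c$, and your glued limit $s^\ast$ is discontinuous at $0$ whenever $c\neq 0$. So the limit produced by your compactness argument need not lie in $\PP\PP_{m,N}\cap C[a,b]$.

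The paper's proof confronts exactly this issue rather than avoiding it: it first reduces to best approximants for each fixed knot vector (a classical existence result), observes that the closure of this family over all knot configurations may contain \emph{discontinuous} splines when knots coalesce, and then invokes Schumaker's argument (Theorem~3.3 in \cite{Schumaker68}) that from any such discontinuous optimum one can construct a continuous spline with no larger error. Your approach can be repaired along the same lines---accept that the limit $s^\ast$ may be discontinuous, note that it still satisfies $\sup_t|s^\ast(t)-f(t)|\le\mu$ piecewise, and then supply the missing replacement step---but that step is the substantive part of the proof, not a formality.
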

    \begin{proof}

    Clearly, we can restrict our search to the set
      \[
        \bigcup_ {(\xi_1,\ldots,\xi_N) \in [a,b]}\Argmin\{\Psi(s):s\in
        \PP\PP_{m}(\xi_1,\ldots,\xi_N)\}
      \]
      where \(\PP\PP_{m}(\xi_1,\ldots,\xi_N)\) is the set of polynomial splines
      with knots at \(\xi_1,\ldots,\xi_N\). This set, as a union of sets of
      solutions to the problem of best Chebyshev approximation with fixed
      knots~\cite{Nurnberger892}, is well defined. Its closure may
      contain discontinuous splines when two of the knots coincide. However, if
      any such discontinuous spline is optimal, then it is possible to construct
      a continuous spline at least as good. The proof is the same as the one
      of \cite[theorem 3.3]{Schumaker68}.
    \end{proof}

      Polynomial splines can be constructed in different ways. In this paper we
      use the truncated power function~\cite[Appendix, p. 191]{nurnberger}:
      \[{(t-\tau)}^j_+=\begin{cases}0, & \text{if } t<\tau \\ {(t-\tau)}^j, & \text{if } t\geq \tau\end{cases}.\]
      Let
      \(X=(a_{00},x_0,\xi_1,x_1,\ldots,\xi_{N-1},x_{N-1})\in\R^{(m+1)N}\), where
      \(x_i=(a_{i1},\ldots,a_{im})\in\R^m,\) \(i=0,\dots, N-1\) and
      \begin{equation}\label{eq:knot_constraints}
      a=\xi_0\leq\xi_1\leq\dots\leq\xi_{N-1}\leq\xi_N=b,
      \end{equation}
       then
      \begin{equation}\label{eq:spline_formulation_Truncated_Powers}
        s(t)=s[X](t)=a_{00}+\sum_{i=0}^{N-1} \sum_{j=1}^{m}
        a_{ij}{(t-\xi_i)}^{m+1-j}_+.
      \end{equation}
      On the \(l\)-th interval, between the knots \(\xi_{l-1}\) and \(\xi_{l}\),
      the spline \(s[X](t)\) coincides with a polynomial which we denote by~\(P_l(X,t)\):
      \begin{equation}\label{eq:polynomials}
        P_l(X,t) = a_{00}+\sum_{i=0}^{l-1} \sum_{j=1}^{m} a_{ij}{(t-\xi_i)}^{m+1-j}.
      \end{equation}

      The formulation~\eqref{eq:spline_formulation_Truncated_Powers} allows for
      the straightforward handling of constraint~(\ref{eq:knot_constraints}): it
      suffices to re-order the knots of any given spline in increasing order to
      obtain a coinciding spline satisfying this constraint. If some knots lie
      outside of the interval~\([a,b]\), they can simply be ignored (or replaced
      by knots of multiplicity~\(0\)) as they will not affect the values taken
      by the spline over this interval.

      Summarising all the above, problem~(\ref{eq:optimisation_problem}) can be reformulated as follows:
      \begin{equation}\label{eq:optimisation_problem_summary}
      \text{ minimize } \sup_{t\in [\xi_0,\xi_N]} \bigg |a_{00}+\sum_{i=0}^{N-1} \sum_{j=1}^{m}
        a_{ij}{(t-\xi_i)}^{m+1-j}_+ -f(t)\bigg |,
        \end{equation}

        \begin{equation}\label{eq:optimisation_problem_summary_constraints}
        \text{ subject to } X\in \R^{(m+1)N}.
      \end{equation}
     The problem is unconstrained and well defined, and the variables are the
     parameters of the polynomial pieces and the knots.

    \subsection{Existing work and motivation}\label{sse:existing-results}

      The theory on polynomial and fixed-knots polynomial spline approximation is
      generally
      complete~\cite{nurnberger85,rice67,sukhorukovaoptimalityfixed,sukhorukovaalgorithmfixed,tarashnin96}.
      These results are thoroughly reviewed in~\cite{nurnberger}.
      In this subsection we review the main known results for free-knot polynomial
      spline approximation.

    \subsubsection{Characterization theorems and necessary optimality conditions}

      Most characterization theorems (optimality conditions for best Chebyshev
      approximation)  are based on the notion of \emph{alternating extreme
      points} of the deviation function.
      \begin{definition}\label{def:alternating_extreme_points}
        Points \(t_1,\ldots, t_p\) are called \emph{alternating extreme points} of
        a function \(g\) defined over an interval \([a,b]\) if there exists a sign
        \(\sigma \in \{-1,1\}\) such that
        \[
        \sigma\cdot {(-1)}^i g(t_i) = \sup_{t\in [a,b]}|g(t)|.
        \]
      \end{definition}

    Traditionally, the set of polynomial splines with \(k\) free knots is defined
    as follows:
    \begin{definition}\label{def:splineNurnberger}(\cite[Appendix, definition 1.1]{nurnberger})
      Let integers \(N\geq 1\), \(m\geq 1\) and \(k\geq 1\) be given. The set of
      polynomial splines of degree \(m\) with \(k\) free knots is
      \[
        \begin{aligned}
          S_{m,k} = \bigg\{& s:[\xi_0,\xi_N] \to \R: \text{there exist points
          }\xi_0<\xi_1<\xi_2<\cdots <\xi_N \text{ and} \\ & \text{integers }
          m_1\ldots m_{N-1} \in \{1,\ldots,m+1\} \text{ with } \sum_{i=1}^{N-1}
          m_i \leq k \text{ such that} \\ & s\big|_{[\xi_i,\xi_{i+1})} \in
          \PP_m, i=0,\ldots,N-2, s\big|_{[\xi_{N-1},\xi_{N}]} \in
          \mathcal{P}_m \text{ and} \\ & s \text{ has at least \(m-m_i\)
            continuous derivatives at }\xi_i, i=1\ldots N\bigg\},
        \end{aligned}
      \]
      where \(m_i, i=1,\dots,N-1\) are the multiplicities of the corresponding
      knots.
    \end{definition}
    This definition is used when smoothness is desirable. Indeed, it
    was shown in~\cite{Schumaker68} that it is not always possible to
    approximate a function by a spline with some degree of differentiability and
    definition \ref{def:splineNurnberger} was introduced to address this problem.
    The number of knots is linked to the differentiability of the spline.
    A necessary condition for best approximation from this set is presented
      in~\cite[theorem~1.6, Appendix~1]{nurnberger}).

      \begin{theorem}\label{thm:free_knot}
        Consider a continuous function \(f\) and a polynomial spline
        \(s_0\) of degree \(m\) with knots \(\xi_0,\ldots,\xi_N\) and the corresponding multiplicities \(m_1,\dots,m_{N-1}.\) 
        The spline \(s_0\) is a best Chebyshev approximation in \(S_{m,k}\) to the
        function \(f\), then there exists an interval \([\xi_p,\xi_{q}]\)
        where the function \(s_0-f\) admits at least \(q-p+m+1+\sum_{i=p+1}^{q-1}m_i\)
        alternating extreme points.
      \end{theorem}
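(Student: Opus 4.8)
The plan is to argue by contradiction: assume $s_0$ is a best approximation but that on \emph{every} subinterval $[\xi_p,\xi_q]$ the error $e=s_0-f$ has at most $q-p+m+\sum_{i=p+1}^{q-1}m_i$ alternating extreme points (one fewer than the asserted bound), and then build a spline $s_1\in S_{m,k}$, arbitrarily close to $s_0$, with $\Psi(s_1)<\Psi(s_0)$, contradicting optimality. Write $\rho=\Psi(s_0)$, let $E=\{t:|e(t)|=\rho\}$ be the compact set of extreme points, and record at each extreme point the sign $\sigma(t)=\sign e(t)$. I will only perturb $s_0$ within its own knot structure (changing coefficients and moving knot positions while keeping the multiplicities), so that the perturbed spline automatically stays in $S_{m,k}$.

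The first step is a dimension count that explains the number $q-p+m+1+\sum_{i=p+1}^{q-1}m_i$. On $[\xi_p,\xi_q]$ the admissible perturbations are generated by three families: the $m+1$ coefficients of the leading polynomial piece; the $m_i$ jump parameters at each interior knot $\xi_i$, $p<i<q$; and the $q-p-1$ interior knot positions. Differentiating the truncated powers $(t-\xi_i)^{m+1-j}_+$ appearing in~\eqref{eq:spline_formulation_Truncated_Powers} with respect to $\xi_i$ shows that displacing $\xi_i$ contributes the one-sided direction $-\sum_j a_{ij}(m+1-j)(t-\xi_i)^{m-j}_+$, the $j$-sum running over the $m_i$ active terms at that knot. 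The linear span of all these directions has dimension $D(p,q)=m+(q-p)+\sum_{i=p+1}^{q-1}m_i$, and the target count is exactly $D(p,q)+1$. This is the familiar Chebyshev ``dimension plus one'' principle, now applied to the tangent family of the free-knot manifold rather than to a fixed linear space; as a sanity check, the knotless case $N=1$ recovers the classical bound $m+2$.

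The core step is to produce the improving perturbation from the short-alternation hypothesis. Ordering the extreme points as $t_1<\dots<t_L$, a correction $g=s_1-s_0$ improves $s_0$ whenever $\sigma(t_j)\,g(t_j)<0$ for every $j$, since then $|e(t_j)+g(t_j)|=\rho-|g(t_j)|<\rho$, and a routine compactness argument on $E$ and on its complement upgrades this to $\Psi(s_0+g)<\rho$ once $g$ is small. Such a $g$ fails to exist only when the prescribed sign pattern forces more sign changes than the perturbation family can realise; the number of sign changes needed equals the length of the longest alternating run minus one. For the weak-Chebyshev spline space the baseline budget is its dimension minus one, and each free knot raises the budget by one (a knot displacement, being one-sided and localised at $\xi_i$, supplies one extra sign change there), giving a total budget of $D(p,q)-1$ on each subinterval. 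The short-alternation hypothesis keeps the required number of sign changes within this budget everywhere, so the local corrections exist; patching them into a single global $g$, using the interior knots as gluing sites to preserve continuity and the prescribed multiplicities, yields the strict improvement and the contradiction.

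I expect the main obstacle to be exactly this patching, together with the nonsmoothness of the knot variables. The knot-displacement directions are genuinely discontinuous in the limit (they degenerate to step functions when $m_i=m$), so the collection of admissible directions is really a one-sided cone and the standard Haar-system machinery does not transfer verbatim; realising a prescribed sign pattern by a \emph{weak} Chebyshev spline, and combining local corrections across junction knots while respecting the ordering~\eqref{eq:knot_constraints} and the multiplicity constraint $\sum m_i\le k$, is the delicate part. This is precisely the kind of difficulty highlighted in~\cite{FreeKnotsOpenProblem96}, and it is what makes a rigorous treatment of the one-sided, nonconvex structure—rather than the heuristic construction sketched above—necessary.
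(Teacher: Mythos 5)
You should first note the status of this statement in the paper: Theorem~\ref{thm:free_knot} is not proved there at all. It is quoted from the literature (Nürnberger, Appendix, Theorem~1.6) as a known necessary condition, and the paper's own contribution (Theorem~\ref{thm:main_characterization}) is proved by a completely different, quasidifferential route. So there is no internal proof to compare your argument against; what you have written is an attempt to reprove a literature result, and it has to stand on its own.

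On its own it does not yet stand: the central step is missing, and it is exactly the step your last paragraph concedes. Your contradiction scheme needs, from the hypothesis ``at most $q-p+m+\sum_{i=p+1}^{q-1}m_i$ alternating extreme points on \emph{every} subinterval,'' a single admissible perturbation $g$ with $\sigma(t_j)\,g(t_j)<0$ at \emph{all} extreme points. Three things block the route you sketch. First, the fixed-knot spline space is only weak Chebyshev, so realising a prescribed sign pattern in it generally yields only $\sigma(t_j)\,g(t_j)\le 0$, with $g$ possibly vanishing at some extreme points (or on whole subintervals); that gives no strict decrease of $\Psi$, and this is precisely why even the fixed-knot characterisation is stated on a subinterval rather than globally. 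Second, the knot-displacement directions form a one-sided cone, not a linear space (moving $\xi_i$ left and right give different one-sided derivatives, and for $m_i=m$ the limiting direction degenerates to a step function), so the ``dimension plus one'' budget and the Haar-type counting of sign changes cannot be applied verbatim to the tangent family; your claim that ``each free knot raises the budget by one'' is asserted, not derived. Third, because the negated hypothesis is subinterval-wise, the improving $g$ must be assembled globally, and the patching of local corrections across junction knots while preserving continuity, the multiplicities $m_i$, and the ordering~\eqref{eq:knot_constraints} is announced but never constructed. Until these three points are made rigorous the proposal is a plausible plan (essentially the classical perturbation strategy behind Nürnberger's proof) rather than a proof.
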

      This result has been strengthened in some cases~\cite{Mulansky92}, but
      these improvements can only be applied to the case of smooth splines.

      Definition~\ref{def:splineNurnberger} and
      theorem~\ref{thm:free_knot} allow for the number and multiplicities of
      knots to change outside of the interval~\([\xi_p,\xi_q]\). As shown in
      example~\ref{ex:too_smooth}, this may lead to some suboptimal solutions to
      satisfy theorem~\ref{thm:free_knot}.
      \begin{example}
        \label{ex:too_smooth}
        Consider the problem of approximating the piecewise linear function joining the points
        \(\{(-2,7),(-1,2),(-\frac{1}{2},-\frac{7}{8}),(0,1),(\frac{1}{2},-\frac{7}{8}),(1,2),(2,7)\}.\)
        Let \(m=3\) and \(k=1\) and consider the spline \(s(t) = |-t^3|\). This
        spline is twice differentiable at its only knot \(t=0\) and has 7
        alternating extreme points. Therefore it satisfies
        theorem~\ref{thm:free_knot}. Yet, it is not even optimal for a knot
        fixed at \(t=0\).
      \end{example}

      A major obstacle to obtain a conclusive characterization theorem has been
      the problem's nonconvexity~\cite[problem~1]{FreeKnotsOpenProblem96}. One
      tool from nonconvex analysis, the quasidifferential~\cite{dr95}, was
      successfully applied to obtain several results on fixed-knots spline
      approximation~\cite{sukhorukovaoptimalityfixed,sukhorukovaalgorithmfixed,tarashnin96}. We will extend
      these results to the case of polynomial splines with free knots.

 \subsubsection{Algorithms}
      Most existing algorithms for best Chebyshev approximation by free knot
      polynomial spline are heuristic. The following one works in two
      steps~\cite{Meinardus89}: first it finds the knots by approximating the
      function with a discontinuous spline. Then, after fixing the knots, it
      applies a Remez-like algorithm~\cite{nurnberger} to find a spline with the
      required smoothness. Although the authors demonstrate that the algorithm
      works well in practice, it is not guaranteed to converge. Indeed, the
      following example shows that it may fail to reach even a locally optimal
      solution.
      \begin{example}
        Consider the problem of approximating the following function with a
        piecewise linear spline with only one internal knot:
        \begin{equation}\label{eq:counter_example_algorithm}
          f(x) = \begin{cases}
          \sin(x) & \text{for } x \in [0,\pi]\\
          -\sin(2x) & \text{for } x \in [\pi,3\pi/2]
          \end{cases}.
        \end{equation}
        As can be seen in figure~\ref{fig:counter_example_algorithm}, the
        algorithm from~\cite{Meinardus89} does not converge towards a locally
        optimal solution. Indeed, the solution it reaches does not satisfy
        existing necessary conditions for a local best approximation~\cite{dr95}
        and~~\cite[p.20]{Dr00}.
        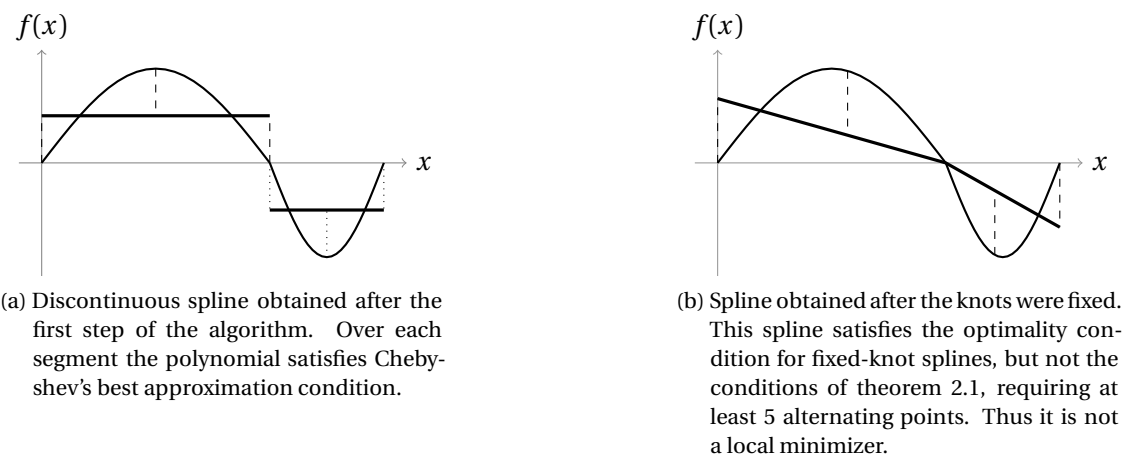
\begin{figure}[!ht]
          \subfloat[Discontinuous spline obtained after the first step of the
          algorithm. Over each segment the polynomial satisfies Chebyshev's
          best approximation condition.]{
            \centering
            \begin{tikzpicture}[xscale=1.5,yscale=1.25,domain=0:4]
              \draw[color=gray!95,->] (-0.2,0) -- (3.2,0) node[right,color=black] {\(x\)};
              \draw[color=gray!95,->] (0,-1.2) -- (0,1.2) node[above,color=black] {\(f(x)\)};

              \draw[style=thick,domain=0:2] plot (\x,{sin(pi/2*\x r)});
              \draw[style=thick,domain=2:3] plot (\x,{-sin(pi*\x r)});
              \draw[style=very thick] (0,0.5) -- (2,0.5);
              \draw[style=very thick] (2,-0.5) -- (3,-0.5);
              \draw[style=dashed] (0,0) -- (0,0.5) ;
              \draw[style=dashed] (1,1) -- (1,0.5) ;
              \draw[style=dashed] (2,0) -- (2,0.5) ;
              \draw[style=dotted] (2,0) -- (2,-0.5) ;
              \draw[style=dotted] (2.5,-1) -- (2.5,-0.5) ;
              \draw[style=dotted] (3,0) -- (3,-0.5) ;
            \end{tikzpicture}}
            \hfill
            \subfloat[Spline obtained after the knots were fixed. This spline
            satisfies the optimality condition for fixed-knot splines, but not
            the conditions of theorem~\ref{thm:free_knot}, requiring at
            least~5 alternating points. Thus it is not a local minimizer.]{
              \centering
              \begin{tikzpicture}[xscale=1.5,yscale=1.25,domain=0:4]
                \draw[color=gray!80,->] (-0.2,0) -- (3.2,0) node[right,color=black] {\(x\)};
                \draw[color=gray!80,->] (0,-1.2) -- (0,1.2) node[above,color=black] {\(f(x)\)};

                \pgfmathparse{4.4934094579090641753 - pi} \pgfmathresult \let\Xmax\pgfmathresult;
                \pgfmathparse{cos(\Xmax r)} \pgfmathresult \let\Slope\pgfmathresult;
                \pgfmathparse{sin(\Xmax r)} \pgfmathresult \let \Ymax \pgfmathresult;

                \draw[style=thick,domain=0:2] plot (\x,{sin(pi/2*\x r)});
                \draw[style=thick,domain=2:3] plot (\x,{-sin(pi*\x r)});
                \draw[style=very thick] (0,pi*\Slope) -- (2,0) -- (3,-pi*\Slope);
                \draw[style=dashed] (0,0) -- (0,pi*\Slope) ;
                \draw[style=dashed] (3,0) -- (3,-pi*\Slope) ;
                \draw[style=dashed] (2-2/pi*\Xmax,\Ymax) -- (2-2/pi*\Xmax,\Slope*\Xmax) ;
                \draw[style=dashed] (2+1/pi*\Xmax,-\Ymax) -- (2+1/pi*\Xmax,-\Slope*\Xmax) ;
              \end{tikzpicture}}
              \caption{An example where the algorithm fails to reach a locally optimal solution}\label{fig:counter_example_algorithm}
        \end{figure}
      \end{example}

      If a heuristic algorithm terminates at a solution which is not locally
      optimal, the results can be improved by most local optimisation methods.
      Therefore, it is crucial to have strong necessary condition for (local)
      optimality validation.  Our motivation  is to develop necessary optimality
      conditions which are stronger than the existing ones~\cite{nurnberger}.

      \subsection{Quasidifferentiable
      functions}\label{sse:quasidifferential_functions}

    \begin{definition}
      \label{def:quasidifferential}
      A function \(f\) defined on an open set \(\Omega\) is
      \emph{quasidifferentiable}~\cite{dr95,Dr00} at
      a point \(x \in \Omega\) if it is locally Lipschitz continuous, directionally
      differentiable at this point and there exists compact, convex sets
      \(\sub f(x)\) and \(\super f(x)\) such that 
      the derivative of \(f\) at \(x\) in any direction \(g\) can be expressed as
      \[
        f'(x,g)=\max_{\mu \in \sub{f(x)}} \langle \mu,g \rangle +
        \min_{\nu \in \super{f(x)}} \langle \nu,g \rangle.
      \]
      The sets \(\sub f(x)\) and \(\super f(x)\) are called respectively the
      \emph{sub-}
      and \emph{superdifferential} of the function \(f\) at the point \(x\). The pair \([\sub
        f(x),\super f(x)]\) is called a \emph{quasidifferential} of the function \(f\) at
      the point \(x\).
    \end{definition}

      At any local minimizer \(x^* \in \Omega\)
      of a quasidifferentiable function \(f\) we have~\cite{dr95,Dr00}
      \begin{equation}\label{eq:quasidiffcondition}
        -\overline{\partial}f(x^*)\subset \underline{\partial}f(x^*).
      \end{equation}
      A point \(x^*\) satisfying condition~\eqref{eq:quasidiffcondition} is an
      \emph{inf-stationary} point.

      The only points where the spline function can be nonsmooth are its knots.
      The discontinuity of the derivative at the knot \(\xi_l\) is determined by
      the value of \(a_{l\,m}\): if \(a_{l\,m}=0\), then the spline is differentiable
      at the knot \(\xi_l\). We say that the knot is a \emph{neutral knot}. If
      \(a_{l\,m}>0\), then around the knot \(\xi_{l}\) the spline
      behaves like the maximum of two linear functions. We call such a knot a
      \emph{max-knot}. Finally, if \(a_{l\,m}<0\), then in the neighbourhood of knot
      \(\xi_{l}\) the spline behaves like the minimum of two linear functions and
      the knot is called a \emph{min-knot}.

      We use the following notations.
      \begin{itemize}
        \item \(E_\mathrm{smooth}\) is the set of extreme points of the deviation
              function outside of internal knots of the spline;
              \item \(E_\mathrm{neutral}\) is the set of neutral knots;
        \item \(E_\mathrm{max}\) is the set of max-knots; and
        \item \(E_\mathrm{min}\) is the set of min-knots.
      \end{itemize}

      In the following section we will formulate the main result of this paper.

  \section{Characterization through quasidifferentiability}\label{sec:necess-cond-optim} 
  
  The necessary condition relies on the following definition: 
  \begin{definition}\label{def:stable_unstable_points} 
    Min-knots with positive maximal deviation and max-knots with negative
    maximal deviation are called \emph{unstable knots}. Max-knots with positive
    maximal deviation and min-knots with negative maximal deviation are called
    \emph{stable knots}.
    \end{definition}

    The following theorem is our main result, which is an improvement to the existing ones in the case when only continuity is desired. 
    \begin{theorem}\label{thm:main_characterization}
      A spline satisfies condition~\eqref{eq:quasidiffcondition} over the
      interval \([\xi_0,\xi_N]\) if and only if there exists a subinterval
      \([\xi_p,\xi_{q}]\) containing a sequence of \(m(q-p)+2+l\) alternating extreme points of
      the deviation function, where~\(l\) is the number of non-neutral internal
      knots inside \((\xi_p,\xi_q)\). The end-points \(\xi_p\) and \(\xi_{q}\) may be
      included in this sequence only if they are not unstable.
    \end{theorem}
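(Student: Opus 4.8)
The plan is to make the abstract inf-stationarity condition~\eqref{eq:quasidiffcondition} concrete by first computing the quasidifferential of $\Psi$, and then translating the inclusion $-\super\Psi(X)\subseteq\sub\Psi(X)$ into a finite-dimensional geometric statement that can be matched against a classical alternation count. Since $\Psi(X)=\sup_t\max\{\psi_t(X),-\psi_t(X)\}$ is a supremum of functions that are smooth in $X$ away from the knots, I would first record that the only source of nonsmoothness is the truncated-power term $a_{lm}(t-\xi_l)_+$: differentiating with respect to the knot $\xi_l$ produces the discontinuous factor $-a_{lm}\mathds{1}[t>\xi_l]$, so $\psi_t$ has a kink in the $\xi_l$-direction exactly when an extreme point $t$ coincides with the knot $\xi_l$. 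Using the Demyanov--Rubinov calculus for maxima, each active non-neutral knot contributes a single segment to the quasidifferential, while $\sub\Psi(X)$ also contains the convex hull of the signed gradients at the smooth extreme points. The key bookkeeping step is to verify that the convexity sign $\sigma\,\sign(a_{lm})$ of the kink, where $\sigma$ is the sign of the deviation at the knot, sends stable knots into the subdifferential and unstable knots into the superdifferential, so that $\super\Psi(X)$ is precisely the Minkowski sum of the segments coming from the active unstable knots; this is where definition~\ref{def:stable_unstable_points} enters.

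Next I would analyse the inclusion $-\super\Psi(X)\subseteq\sub\Psi(X)$. Because $\super\Psi(X)$ is a zonotope (a Minkowski sum of segments, one per active unstable knot) and $\sub\Psi(X)$ is convex, the inclusion holds if and only if $-\nu\in\sub\Psi(X)$ for every vertex $\nu$ of this zonotope, i.e.\ for every choice of endpoint of each segment. For the smooth part this reduces to a Kolmogorov-type criterion: a signed convex combination of gradients at the extreme points must reproduce $-\nu$. Here the confined quasidifferential of section~\ref{sec:confined_quasidifferential} becomes useful, because the truncated-power gradients at a point $t$ vanish identically for all knots lying to the right of $t$, so the supports of the gradient vectors are nested by subinterval and the global inclusion decouples into inclusions attached to blocks of consecutive subintervals. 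The binding vertices are those pushing the unstable-knot segments to their outer endpoints, and they force attention onto a single block $[\xi_p,\xi_q]$.

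On such a block I would apply the invertible linear transformation of section~\ref{sec:chang-vect-basis} to triangularise the truncated-power structure, turning the restricted gradients into those of a standard best-Chebyshev-approximation problem over $[\xi_p,\xi_q]$ in a space of dimension $m(q-p)+1+l$: the $(m+1)(q-p)$ polynomial coefficients reduced by the $(q-p-1)$ continuity constraints, plus one free direction for each of the $l$ non-neutral internal knots (neutral knots contribute a continuity constraint but no direction). The inf-stationarity inclusion on the block then becomes the statement that the zero vector lies in the convex hull of the transformed signed gradients in this subspace, which by the classical characterisation of best Chebyshev approximation (via Carath\'eodory and the Chebyshev-system alternation theory already available for fixed knots) is equivalent to the existence of $m(q-p)+2+l$ alternating extreme points. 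The stability conditions single out precisely whether $\xi_p$ and $\xi_q$ may be counted: an unstable endpoint contributes to the superdifferential rather than the subdifferential, so at the binding vertex its gradient is unavailable as an alternating point, which is exactly the restriction stated in the theorem. This yields proposition~\ref{prop:blocksubintervals}.

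The final step is to establish the equivalence of proposition~\ref{prop:blocksubintervals} with theorem~\ref{thm:main_characterization}, namely that global inf-stationarity over $[\xi_0,\xi_N]$ holds if and only if some block $[\xi_p,\xi_q]$ carries the required alternation. I expect the main obstacle to be the superdifferential's zonotope structure: the inclusion $-\super\Psi\subseteq\sub\Psi$ is not a single linear condition but a family indexed by the exponentially many vertices of the zonotope, and one must prove that this entire family collapses to a condition on one carefully chosen block and one sign pattern. Matching the alternation sign pattern to the correct side of every knot's segment contribution, and showing rigorously that an unstable endpoint cannot be counted while a stable one can, is the delicate combinatorial heart of the argument; everything else is either the standard quasidifferential calculus or the classical alternation theory transported through the linear change of basis.
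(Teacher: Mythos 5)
Your first three steps track the paper's own route: the quasidifferential computation (with the superdifferential being the Minkowski sum of segments contributed by the unstable knots), the reduction of $-\super\Psi\subseteq\sub\Psi$ to a family of convex-hull conditions indexed by selections from those segments (this is essentially proposition~\ref{prop:blocksubintervals}, which, note, is only that convex-hull restatement of stationarity, not the alternation statement you attribute to it), and the change of basis of section~\ref{sec:chang-vect-basis}. The genuine gap is at the decisive step, where you pass from the condition~\eqref{eq:zero_in_hull_gradients} on $[\xi_p,\xi_q]$ to the alternation count by invoking ``the classical characterisation of best Chebyshev approximation'' in a space of dimension $m(q-p)+1+l$. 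No such off-the-shelf equivalence exists here: the spline space on $[\xi_p,\xi_q]$ with interior knots is only a weak Chebyshev system, not a Haar system, and for weak Chebyshev systems ``$\zero$ in the convex hull of the signed gradients in dimension $D$'' is not equivalent to $D+1$ globally alternating points --- this failure is exactly why the fixed-knot characterizations are stated on subintervals and why the free-knot problem is open. Moreover, the $l$ extra ``free directions'' you attach to the non-neutral knots are not evaluations of a function system at extreme points: after the transformation they are the selections $\delta_\Delta$ inside the segments $\Delta$, quantified over all choices in $\coprod_{\Delta\in\U_p^q}\Delta$, so they cannot simply be absorbed into a Chebyshev-system dimension count.

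The paper closes this gap differently: it takes a stationary subinterval containing no strict stationary subinterval, uses lemma~\ref{lem:stationary_subinterval} and corollary~\ref{cor:internal_knots_are_unstable} to conclude that every internal non-neutral knot of that subinterval is an \emph{unstable extreme point}, and then decomposes the transformed inclusion blockwise into the systems~\eqref{eq:zero_in_Tblocksimple}, each a block-triangular Vandermonde system to which the fixed-knot results of~\cite{tarashnin96,sukhorukovaoptimalityfixed} apply, giving $m\tau_i+2$ alternating points per block subinterval. The count $m(q-p)+2+l$ then comes from gluing these sequences: each unstable internal knot must appear in the alternating sequence of both adjacent blocks (counted twice with consistent signs), and the minimality of the subinterval is what rules out the case where some unstable knot is not shared (otherwise $\zero$ could already be formed on a shorter interval); the exclusion of unstable endpoints comes from the observation that $\zero$ in~\eqref{eq:zero_conv_hull_main} can be formed without $\xi_p,\xi_q$ when they are unstable. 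This blockwise reduction, sign-compatible gluing at unstable knots, and counting argument is precisely what your proposal defers as ``the delicate combinatorial heart''; since that is the actual content of theorem~\ref{thm:main_characterization} beyond standard quasidifferential calculus, the proposal as written does not yet constitute a proof.
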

    \begin{proof}
      The proof of theorem~\ref{thm:main_characterization} proceeds as follows:
      \begin{itemize}
        \item First,  in section~\ref{sec:nonsmooth-optimization-approach}
              we show that the quasidifferential of
              function~\eqref{eq:main_objective_function} is expressed in
              terms of extreme points.
        \item Second, in section~\ref{sec:confined_quasidifferential}
              we introduce the notion of \emph{confined quasidifferential}, a
              subset of the whole quasidifferential containing only the components
              based on the extreme points from a given subinterval.
        \item Third, in section~\ref{sec:chang-vect-basis} we define an invertible linear transformation which simplifies the
        vectors from the sub- and superdifferentials.
        \item Fourth, in section~\ref{sec:identify_stationary_subintervals} we
          prove proposition~\ref{prop:blocksubintervals}, which is equivalent
          to~(\ref{eq:quasidiffcondition}) and therefore it gives another
          necessary and sufficient stationarity condition.  
        \item Our final step is to show that
          proposition~\ref{prop:blocksubintervals} and
          theorem~\ref{thm:main_characterization} (our main result) are
          equivalent and therefore our main result is equivalent to the
          stationarity condition in the sense of Demyanov-Rubinov.\qedhere
      \end{itemize}
    \end{proof}

  \section{Quasidifferential of the objective function}\label{sec:nonsmooth-optimization-approach}

    \subsection{Quasidifferential of the spline functions}\label{sse:quasidifferential-spline-function}

     The aim of this subsection is to analyse the
      function \(\Psi\) as a function of \(X\). 
      As pointed above, when the point \(t\) is a knot,
      the function \(\psi_t(s[X])\) may not be differentiable.

      \subsubsection{Case when the point \(t\) is not a knot}\label{sec:Quasidif_Spline_Not_Knot}

        If there exists an index \(l\) such that \(\xi_l<t<\xi_{l+1}\), then the
        function \(\psi_t\) is differentiable:
        \begin{equation}\label{eq:polynomial_gradient}
          \nabla\psi_t(s[X]) = \nabla_X P_l(X,t),
        \end{equation}
        where \(\nabla_X P_l(X,t)=\partial P_l(X,t)/\partial X\).

      \subsubsection{Case when point \(t\) is a knot}\label{sec:Quasidif_Spline_Knot}

        The function \(\psi_t(s[X])\) can be nondifferentiable at \(t=\xi_l\) for
        some \(l\in \{1,\ldots,N-1\}\). In order to compute its quasidifferential we
        isolate the nonsmooth part by rewriting the function as follows:
        \(\psi_t(s[X])=\zeta_l^t(X)+\gamma_l^{t}(X)\), where
        \begin{align}
          \zeta_{l}^{t}(X)&= a_{lm} {(t-\xi_l)}_+, \nonumber \\
          \gamma_l^t(X)&=
          a_{00} +  \sum_{i=0}^{l-1}\sum_{j=1}^{m}
          a_{ij}{(t-\xi_i)}^{m+1-j}_+  & [&= \mathrm{P_l(X,t)}\text{ since }\mathrm{\xi_{l-1}<t}] \nonumber\\
          &\quad +\sum_{j=1}^{m-1} a_{l j}
          {(t-\xi_l)}^{m+1-j}_+ -f(t)\nonumber\\ & \quad+
          \sum_{i=l+1}^N\sum_{j=1}^{m}
          a_{ij}{(t-\xi_i)}^{m+1-j}_+ &[&=
          \mathrm{0}\text{ since } \mathrm{t<\xi_{l+1}}]
          \nonumber\\
          &= P_l(X,t) + \sum_{j=1}^{m-1} a_{l j}{(t-\xi_l)}^{m+1-j}_+ -f(t). \label{eq:gamma_function}
        \end{align}

        \paragraph{Quasidifferential of function \(\gamma_l^t\):}

        At \(t=\xi_l \), the function \(\gamma_l^t\) is differentiable with respect
        to \(X\), and its gradient coincides with the gradient of \(P_l(X,t)\):
        \begin{equation}
          \nabla\gamma_l^t(X) = \nabla_X P_l(X,t).\label{eq:gamma_gradient}
        \end{equation}

        \paragraph{Quasidifferential of function \(\zeta_l^t\):}
        The function \(\zeta_l^t\) may not be differentiable. As a product of a
        constant and a maximum of two linear functions its quasidifferential depends
        on the sign of \(a_{l m}\):
          \begin{subequations}\label{eq:zeta_quasidifferential}
            \begin{align}
              \intertext{if \(a_{lm}<0\), then}
                \label{eq:zeta_min}
                  &\begin{aligned}
                    \underline{\partial}\zeta_l^t(X) &= \{\zero_{N(m+1)}\}\\
                    \overline{\partial}\zeta_l^t(X) &=
                    \co\{\zero_{N(m+1)},{\left(\zero_{l(m+1)-1},-a_{l
                          m},\zero_{(N-l)(m+1)}\right)}^T\},
                  \end{aligned}
              \intertext{if \(a_{lm}=0\) then}
                      &\begin{aligned}
                        \underline{\partial}\zeta_l^t(X) &= \{\zero_{N(m+1)}\}\\
                        \overline{\partial}\zeta_l^t(X) &= \{\zero_{N(m+1)}\}.
                      \end{aligned}\label{eq:zeta_smooth}
                      \intertext{Notice that in this case the function \(\psi_t\) is
                    differentiable at \(t=\xi_l\).}
              \intertext{if \(a_{lm}>0\), then}
                    \label{eq:zeta_max}
                      &\begin{aligned}
                        \underline{\partial}\zeta_l^t(X) &=
                        \co\{\zero_{N(m+1)},{\left(\zero_{l(m+1)-1},-a_{l
                        m},\zero_{(N-l)(m+1)}\right)}^T\},\\
                        \overline{\partial}\zeta_l^t(X) &= \{\zero_{N(m+1)}\}.
                      \end{aligned}
                    \end{align}
          \end{subequations}
        Here \(\co \) represents the convex hull of the corresponding set, \(\zero_n\) is an \(n-\)dimensional column vector.

        \paragraph{Combine  \(\gamma_l^t\) and \(\zeta_l^t\):}
        Finally, notice that at \(t=\xi_l\)
        \begin{align*}
          \nabla_X P_{l+1}(X,t)-\nabla_X P_l(X,t) &= \nabla_X
          (P_{l+1}-P_{l})(X,t)\\&={(\zero_{l(m+1)-1},-a_{lm},\zero_{(N-l)(m+1)})}^T.
        \end{align*}
        Therefore, combining equation~\eqref{eq:gamma_gradient} with~\eqref{eq:zeta_quasidifferential} using
        quasidifferential calculus~\cite[Chapter 1, p.11]{Dr00}, we obtain
          \begin{subequations}\label{eq:psi_quasidifferential}
            \begin{align}
              \intertext{if \(a_{l m}<0\), then}
                      &\begin{aligned}
                        \underline{\partial}\psi_t(s[X]) &= \{\zero_{N(m+1)}\},\\
                        \overline{\partial}\psi_t(s[X]) &=
                        \co\{\nabla_X P_{l}(X,t),\nabla_X P_{l+1}(X,t)\};
                      \end{aligned}\label{eq:quasidifferential_psi_min}
              \intertext{if \(a_{lm}=0\) then}
                      &\begin{aligned}
                        \underline{\partial}\psi_t(s[X]) &=
                        \{\nabla_X P_{l}(X,t)\} = \{\nabla_X P_{l+1}(X,t)\},\\
                        \overline{\partial}\psi_t(s[X]) &= \{\zero_{N(m+1)}\};
                      \end{aligned}\label{eq:quasidifferential_psi_smooth}
              \intertext{if \(a_{lm}>0\), then}
                      &\begin{aligned}
                        \underline{\partial}\psi_t(s[X]) &= \co\{\nabla_X P_{l}(X,t),\nabla_X P_{l+1}(X,t)\},\\
                        \overline{\partial}\psi_t(s[X]) &= \{\zero_{N(m+1)}\}.
                      \end{aligned}\label{eq:quasidifferential_psi_max}
            \end{align}
          \end{subequations}

\subsection{Continual maximum}\label{sse:continual max}
    The function \(\Psi(s)\) defined in~\eqref{eq:main_objective_function} is a continual
    maximum of functions. The quasidifferential properties and calculus of such
    functions have been studied in~\cite{luderer86}. According
    to~\cite[theorem~1]{luderer86}, the function \(\Psi(s)\) is
    quasidifferentiable at a point \(s_0\) if:
    \begin{enumerate}
      \item the function under the supremum, \(|\psi(s,t)|\), is
            continuous in \(t\) for any \(s\) from the neighbourhood of \(s_0\);
      \item the function \(|\psi(s,t)|\) is uniformly directionally differentiable
            at the point~\(s_0\) for any \(t\in [\xi_0,\xi_N]\);
      \item the function \(|\psi(s,t)|\) is quasidifferentiable with respect to
            \(s\) at the point \(s_0\) and for any extreme point \(t\) there
            exists a pair of
            convex compact sets \(B(s_0)\) and \(A_t(s_0)\) such that \(B(s_0)=\super
            |\psi_t(s_0)|+A_t(s_0)\).
    \end{enumerate}

    The first condition is verified whenever the function \(f(t)\) is
    continuous. The second condition is also verified, because the function
    \(\psi(s,t)\) is locally Lipschitz continuous.
To verify the third condition, we need to calculate the
    quasidifferential of the function \(\psi_t(s[X])\), and therefore of the
    spline~\(s[X]\). We will verify this condition in subsection~\ref{sse:quasidifferential-objective-function}.


    \subsection{Quasidifferential of the objective function}\label{sse:quasidifferential-objective-function}

      \subsubsection{Explicit formulation of the quasidifferential of the objective function}\label{sss:quasidifferential}

        We denote the index of the interval containing the extreme point \(t\) by
        \(j_t\). If \(t\) is a knot, then it joins the \(j_t\)-th and \((j_t+1)\)-st
        intervals.

        We apply quasidifferential calculus~\cite[Chapter 1, formula (5.3)]{Dr00}
        to obtain the quasidifferential of function~\(|\psi_t(s)|\) from that of
        function~\(\psi_t(s)\). Since there are a finite number of points (all of them are
        knots) where the superdifferential is not zero, it is easy to construct the
        sets \(B(s)\) and \(A_t(s)\) required to fulfil the third requirement
        of~\cite[theorem 1]{luderer86}. One way to construct these sets is as follows:
        \begin{itemize}
        \item \(t\) is not an internal knot, then $$B(s_0)=A_t(s_0)=\sum_{i=1}^{N}\super
            |\psi_{\xi_i}(s_0)|;$$
        \item \(t=\xi_j, 1<j<N\) is an internal knot, then
        $$B(s_0)=\sum_{i=1}^{N}\super
            |\psi_{\xi_i}(s_0)|,\quad  A_t(s_0)=\sum_{i=1}^{j-1}\super
            |\psi_{\xi_i}(s_0)|+\sum_{i=j+1}^{N}\super
            |\psi_{\xi_i}(s_0)|.$$
        \end{itemize}
        Let us denote the sets of indices of the extreme points of the deviation
        function respectively with positive and negative deviation by \(K^+=\{t:
        \psi(s,t)=\Psi(s)\}\) and \(K^-=\{t: \psi(s,t)=-\Psi(s)\}\). Function
        \(\Psi\) admits the following superdifferential:
        \begin{subequations}\label{eq:superdifferential_Psi}
          \begin{equation}\label{eq:super_sum}
            \overline{\partial}\Psi(s[X])=\{0_{2N}\}+\co(\Sigma^+-\Sigma^-),
          \end{equation}
          where
          \begin{align}
            \Sigma^+&=  \sum_{t\in K^+\cap E_\mathrm{min}}\co\{\nabla_X P_{j_t}(X,t),\nabla_X P_{j_t+1}(X,t)\}\label{eq:super:K+Lm}\\
            \Sigma^-&= \sum_{t\in K^-\cap E_\mathrm{max}}\co\{\nabla_X P_{j_t}(X,t),\nabla_X P_{j_t+1}(X,t)\}.\label{eq:super:K-LM}
          \end{align}
        \end{subequations}
        and the corresponding subdifferential:
        \begin{subequations}\label{eq:subdifferential_Psi}
          \begin{align}
            \underline{\partial}\Psi(s[X])=\co&\Bigg\{
            \bigcup_{t\in K^+\cap (E_\mathrm{smooth}\cup E_\mathrm{neutral})}(\nabla_X P_{j_t}(X,t)-\overline{\partial}\Psi(s[X])), \label{eq:sub:K+Li}\\
            &\bigcup_{t\in K^-\cap (E_\mathrm{smooth}\cup E_\mathrm{neutral})}(-\nabla_X P_{j_t}(X,t)-\overline{\partial}\Psi(s[X])),\label{eq:sub:K-Li}\\
            &\bigcup_{t\in K^+\cap E_\mathrm{max}}(\co\{\nabla_X P_{j_t}(X,t),\nabla_X P_{j_t+1}(X,t)\}-\overline{\partial}\Psi(s[X])),\label{eq:sub:K+LM}\\
            &\bigcup_{t\in K^-\cap E_\mathrm{min}}(-\co\{\nabla_X P_{j_t}(X,t),\nabla_X P_{j_t+1}(X,t)\}-\overline{\partial}\Psi(s[X]))\label{eq:sub:K-Lm},\\
            &\bigcup_{t\in K^+\cap E_\mathrm{min}}(0_{2n}-\sum_{\mathclap{
              \substack{\tau\in K^+\cap E_\mathrm{min}\\\tau\neq t}}}\co\{\nabla_X P_{j_t}(X,\tau),\nabla_X P_{j_t+1}(X,\tau)\}+\Sigma^-),\label{eq:sub:K+Lm}\\
              &\bigcup_{t\in K^-\cap E_\mathrm{max}}(0_{2n}+\sum_{\mathclap{
                \substack{\tau\in K^-\cap E_\mathrm{max}\\\tau\neq t}}}\co\{\nabla_X P_{j_t}(X,\tau),\nabla_X P_{j_t+1}(X,\tau)\}-\Sigma^+)\label{eq:sub:K-LM}
              \Bigg\}.
          \end{align}
        \end{subequations}

        \begin{remark}
          Only unstable extreme points contribute to the superdifferential.
        \end{remark}

        Finally we introduce the following notation. Let
        \begin{equation}\label{beta}
        \beta_{j_t}(X,t) = \sign(\Psi(s[X],t))\cdot \nabla_X P_{j_t}(X,t).
        \end{equation}
        To the smooth extreme points (non internal knots), neutral knots and stable knots we associate the set
        \[
        \S=\{\beta_{j_t}(X,t): t\in E_\mathrm{smooth}\cup E_\mathrm{neutral}\cup(K^+\cap E_\mathrm{max}) \cup (K^-\cap E_\mathrm{min}) \}.\
        \]
        To the extreme points coinciding with unstable knots we associate the following sets:
        \begin{align*}
          \Delta_{j_t}&=\co\{\beta_{j_t}(X,t), \beta_{j_t+1}(X,t)\};\\
          C_\Delta&=-\sum_{\Delta_{j_t}\neq \Delta} \Delta_{j_t}.
        \end{align*}
        Define \(\U=\{\Delta_{j_t}: t\in (K^+\cap E_\mathrm{min}) \cup
        (K^-\cap E_\mathrm{max})\}\).
        The quasidifferential of the function \(\Psi\) is
        \(\overline{\partial}\Psi = \sum_{\Delta_{j_t}\in \U} \Delta_{j_t}\) and
        \(\underline{\partial}\Psi=\co\{\S-\overline{\partial}\Psi,\cup_{\Delta\in \U}(\zero+ C_\Delta)\}\).

  \section{Confined quasidifferential}\label{sec:confined_quasidifferential}

    In this section we show that the
    inclusion~\eqref{eq:quasidiffcondition} is verified when it is verified on a subinterval.
    We start by introducing the confined quasidifferential.
    Let \(p\) and \(q\) be given indices. Consider the following notation.
    We define \(\S_p^q=\{\beta_{j_t}(X,T)\in \S: t\in
      [\xi_p,\xi_q]\}\) and \(\U_p^q=\{\Delta_v\in \U: \xi_v\in
      (\xi_p,\xi_q)\}\) the respective subsets of elements of \(\S\) and \(\U\)
    corresponding to the extreme points lying in the interval~\([\xi_p,\xi_q]\).
    Similarly, ${C_{p}^{q}}_\Delta=-\sum_{\Delta_{j_t}\in
      \U_p^q, \Delta_{j_t}\neq \Delta} \Delta_{j_t}$.

    \begin{definition}\label{def:confined_subdifferential}
      The quasidifferential
      \([\underline{\partial}\Psi,\overline{\partial}\Psi\)] confined to the
      interval \([\xi_p,\xi_q]\) is defined by
      \begin{subequations}\label{eq:confined_quasidifferential}
        \begin{align}\label{eq:confined_superdifferential}
          \overline{\partial_p^q} \Psi&=\sum_{\Delta \in \U_p^q} \Delta,\\\label{eq:confined_subdifferential}
          \underline{\partial_p^q} \Psi&=\co\bigg\{\S_p^q-\overline{\partial_p^q} \Psi,
          \bigcup_{\Delta\in \U\mathrlap{_p^q}}{C_p^q}_\Delta\bigg\}.
        \end{align}
      \end{subequations}
    \end{definition}

    \subsection{Stationary subintervals}\label{sse:stationary-subinterval}

    \begin{definition}
      \label{def:sationary_interval}
      An interval \([\xi_p,\xi_q]\) is \emph{stationary} if
        \begin{equation}\label{eq:confined_inclusion}
          -\overline{\partial_p^q} \Psi(s)\subset \underline{\partial_p^q} \Psi(s)
        \end{equation}
    \end{definition}

    Denote by \(\coprod_{i=1}^q A_i\) the collection of sets composed of one element per set \(A_i\):
    \[
      \{\delta_1 \ldots \delta_q\} \in \coprod_{i=1}^q A_i \Leftrightarrow (\delta_1,\dots,\delta_q) \in \prod_{i=1}^q A_i
    \]
    and \(\prod\) represents the Cartesian product.
  \begin{proposition}\label{prop:blocksubintervals}
    The interval \([\xi_p,\xi_q]\) is stationary if and only if
    \begin{equation}
      \label{eq:zero_in_hull_gradients}
      \zero \in \co \big\{\S_p^q,\C\big\},
        \forall \C\in\coprod_{\Delta\in \U_p^q} \Delta,
    \end{equation}
  \end{proposition}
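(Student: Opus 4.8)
The plan is to reduce both sides of the claimed equivalence to a single family of scalar inequalities indexed by a direction $g$, and to match the two families through the reflection $g\mapsto -g$. Observe first that $-\overline{\partial_p^q}\Psi=-\sum_{\Delta\in\U_p^q}\Delta$ is a negated Minkowski sum of segments, hence compact and convex, while $\underline{\partial_p^q}\Psi$ is a convex hull of finitely many compact convex sets and is therefore also compact and convex. For compact convex sets the inclusion $-\overline{\partial_p^q}\Psi\subseteq\underline{\partial_p^q}\Psi$ holds if and only if $h_{-\overline{\partial_p^q}\Psi}(g)\le h_{\underline{\partial_p^q}\Psi}(g)$ for every $g$, where $h_K(g)=\max_{z\in K}\langle z,g\rangle$ is the support function. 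Thus the definition of a stationary interval is equivalent to a $g$-indexed family of scalar inequalities.

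Next I would compute the two support functions explicitly, writing $a_\Delta(g)=h_{-\Delta}(g)=-\min_{z\in\Delta}\langle z,g\rangle$ for each $\Delta\in\U_p^q$ and $M(g)=\max_{\beta\in\S_p^q}\langle\beta,g\rangle$. Additivity of support functions under Minkowski sums gives $h_{-\overline{\partial_p^q}\Psi}(g)=\sum_{\Delta}a_\Delta(g)$ and $h_{{C_p^q}_\Delta}(g)=\sum_{\Delta'\neq\Delta}a_{\Delta'}(g)$, while the rule that the support function of a convex hull of a union is the maximum of the component support functions yields
\[
  h_{\underline{\partial_p^q}\Psi}(g)=\max\Big(M(g)+\sum_{\Delta}a_\Delta(g),\ \sum_{\Delta}a_\Delta(g)-\min_{\Delta}a_\Delta(g)\Big).
\]
Subtracting the common term $\sum_{\Delta}a_\Delta(g)$ from the stationarity inequality collapses it to the compact condition, which I will refer to as $(\star)$:
\[
  \max\Big(M(g),\,-\min_{\Delta\in\U_p^q}a_\Delta(g)\Big)\ge 0\qquad\text{for every }g.
\]

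Separately I would characterize the right-hand side of the proposition by a separation argument. For a fixed selection $\C\in\coprod_{\Delta}\Delta$, the point $\zero$ fails to lie in $\co\{\S_p^q,\C\}$ precisely when some $g$ is strictly positive on every element of $\S_p^q$ and of $\C$; letting $\C$ vary, the requirement $\zero\in\co\{\S_p^q,\C\}$ for all $\C$ fails exactly when there is a $g$ with $\langle\beta,g\rangle>0$ for all $\beta\in\S_p^q$ and with $\max_{z\in\Delta}\langle z,g\rangle>0$ for every $\Delta$. Negating, the right-hand condition reads: for every $g$, either $\min_{\beta}\langle\beta,g\rangle\le 0$, or $\max_{z\in\Delta}\langle z,g\rangle\le 0$ for some $\Delta$. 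The crux of the argument, and the step I expect to require the most care, is the observation that $a_\Delta(-g)=\max_{z\in\Delta}\langle z,g\rangle$ and $M(-g)=-\min_{\beta}\langle\beta,g\rangle$, so this negated right-hand condition is exactly $(\star)$ read at the reflected direction $-g$; since $g\mapsto -g$ is a bijection, the two "for all $g$" families coincide and the equivalence follows. I would conclude by checking the degenerate cases $\S_p^q=\emptyset$ and $\U_p^q=\emptyset$ separately, using the conventions $\max_{\emptyset}=-\infty$ and that an empty Minkowski sum equals $\{\zero\}$.
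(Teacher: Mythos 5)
Your argument is correct, but it takes a genuinely different route from the paper. The paper proves the two implications directly at the level of elements: for the ``only if'' part it decomposes an arbitrary \(b\in-\overline{\partial_p^q}\Psi\) as a convex combination drawn from \(\S_p^q-\overline{\partial_p^q}\Psi\) and the sets \({C_p^q}_\Delta\), regroups terms using \(\overline{\partial_p^q}\Psi=\Delta-{C_p^q}_\Delta\), and then cancels the common summand with the auxiliary Lemma~\ref{lem:sum_convex_sets_inclusion} (a R\aa dstr\"om-type cancellation); for the ``if'' part it builds the required convex combination explicitly from a decomposition \(b=\sum_{\Delta}\delta_\Delta\). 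You instead dualize: additivity of support functions over Minkowski sums and the max rule for hulls of unions turn the inclusion of Definition~\ref{def:sationary_interval} into the scalar condition \(\max\big(M(g),-\min_{\Delta\in\U_p^q}a_\Delta(g)\big)\ge 0\) for all \(g\), where the cancellation lemma degenerates into subtracting the common number \(\sum_\Delta a_\Delta(g)\), and the quantifier over selections \(\C\) is absorbed because minimizing \(\max_\Delta\langle\delta_\Delta,g\rangle\) over independent choices \(\delta_\Delta\in\Delta\) gives \(\max_\Delta\min_{\delta\in\Delta}\langle\delta,g\rangle\) (your reflection \(g\mapsto-g\) bookkeeping checks out and amounts to the same identity). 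What each approach buys: yours is conceptually shorter, dispenses with the cancellation lemma entirely, and treats both directions symmetrically through one family of inequalities; the paper's is more elementary and self-contained (no separation theorem or support-function calculus), and its explicit convex-combination decompositions are the ones reused in the later sections. Two small points you should make explicit: your strict-separation step and the attainment of the various minima require \(\co\{\S_p^q,\C\}\) and \(\underline{\partial_p^q}\Psi\) to be compact, i.e.\ you should record that \(\S_p^q\) is compact (the paper needs the analogous closedness hypothesis in Lemma~\ref{lem:sum_convex_sets_inclusion}, so this is a hypothesis to state, not a flaw); and \(\S_p^q-\overline{\partial_p^q}\Psi\) need not be convex, so describe \(\underline{\partial_p^q}\Psi\) as the hull of a union of compact sets rather than of ``finitely many compact convex sets''---the max rule for support functions holds regardless.
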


    Before proceeding to prove this proposition we recall the following result:
    \begin{lemma}
      \label{lem:sum_convex_sets_inclusion}
      Let \(A\) and \(B\) be convex sets and \(C\) a compact set. If
      \(B+C \subset A+C\), then \(B\subset A\).
    \end{lemma}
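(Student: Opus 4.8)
The goal is to show that $B \subseteq A$; that is, every $b \in B$ lies in $A$. I would give two related arguments and retain whichever reads more cleanly in context.

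\emph{Support-function approach (primary).} For a nonempty compact set $D \subset \R^{(m+1)N}$ write $h_D(u) = \max_{d \in D}\langle d,u\rangle$ for its support function. In the setting where this lemma is applied, $A$, $B$ and $C$ are convex hulls of finitely many gradient vectors, hence nonempty compact convex polytopes, so each support function is finite and real-valued. The algebraic fact driving the proof is additivity over Minkowski sums, $h_{A+C} = h_A + h_C$ and $h_{B+C} = h_B + h_C$, which holds because the maximum over a sum splits as a sum of maxima, the individual maxima being attained by compactness. The inclusion $B+C \subseteq A+C$ between compact convex sets is equivalent to the pointwise inequality $h_{B+C} \leq h_{A+C}$, i.e. $h_B + h_C \leq h_A + h_C$. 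Since $C$ is compact, $h_C(u)$ is finite for every $u$ and may be cancelled, yielding $h_B \leq h_A$. For closed convex sets the domination $h_B \leq h_A$ is in turn equivalent to $B \subseteq A$, which is the claim.

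\emph{Cancellation (R\aa dstr\"om) approach (fallback).} Without invoking support functions, I would fix $b \in B$ and iterate the inclusion: starting from any $c_0 \in C$, at each step $b + c_{n-1} \in B+C \subseteq A+C$ produces $a_n \in A$ and $c_n \in C$ with $b+c_{n-1} = a_n + c_n$. Summing over $n = 1,\dots,N$ and telescoping the $c$-terms gives $Nb + c_0 - c_N = \sum_{n=1}^N a_n$, so that $b + (c_0 - c_N)/N = \frac1N \sum_{n=1}^N a_n$. The right-hand side lies in $A$ by convexity, being an average of points of $A$; and since $C$ is bounded, the remainder $(c_0 - c_N)/N \to \zero$ as $N \to \infty$. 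Hence $b$ is a limit of points of $A$, and as $A$ is closed (being a compact polytope in our application) we conclude $b \in A$.

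\emph{Main obstacle.} The delicate point in either argument is the cancellation of $C$. In the support-function version this works precisely because compactness of $C$ makes $h_C$ finite, so it can be subtracted; in the iterative version the same boundedness forces the error term $(c_0-c_N)/N$ to vanish. Convexity of $A$ keeps the averaged iterates inside $A$, while closedness of $A$ — automatic for the polytopes we deal with — is what upgrades the conclusion from $b \in \overline{A}$ to $b \in A$. I expect the only genuine bookkeeping to be checking that the specific sets $\S_p^q$, $\U_p^q$ and the Minkowski combinations formed from them are indeed nonempty, compact and convex, after which both arguments apply verbatim.
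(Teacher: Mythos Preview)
Your support-function argument is essentially the paper's own proof recast in dual language. The paper argues directly that every open half-space containing $A$ also contains $B$: given $u$ and $\alpha$ with $\langle u,a\rangle>\alpha$ for all $a\in A$, it picks $c_0\in C$ minimising $\langle u,\cdot\rangle$ over $C$, writes $b+c_0=a+c$ with $a\in A$, $c\in C$, and reads off $\langle u,b\rangle=\langle u,a\rangle+\langle u,c-c_0\rangle>\alpha$. Your identity $h_{B+C}=h_B+h_C$ and the cancellation of the finite term $h_C$ is exactly this computation, with the explicit extremiser $c_0$ absorbed into the definition of the support function. Both versions rest on the hyperplane characterisation of inclusion between closed convex sets, and both tacitly need $A$ closed --- a point you correctly flag and which holds in the application.

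Your second, R\aa dstr\"om-style iteration is a genuinely different route: it trades the separation theorem for an averaging/telescoping argument, using only convexity of $A$, boundedness of $C$, and closedness of $A$ at the final step. It is slightly more elementary in that it avoids duality altogether, at the cost of a little bookkeeping. Either argument is acceptable here; the paper's version (equivalently your first) has the advantage of being a two-line computation once the extremal $c_0$ is chosen.
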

    \begin{proof}
      It suffices to show that no hyperplane can separate the set \(A\) from
      any subset of \(B\).
      Consider a nonzero vector \(u\in \R^n, u\neq 0\) and a scalar \(\alpha\in \R\) such that \(\langle u,a\rangle > \alpha\),
        for all \(a\in A\) and take any point \(b\in B\).
      Define \(c_0\in C\) such that \(\langle u,c_0\rangle = \min\{\langle
      u,c\rangle: c\in C\}\) and let \(d=b+c_0\). Since \(d\in B+C\subset A+C\), we can find \((a,c)\in A\times
      C\) such that \(d=a+c\), and so
      \[
        \langle u,b\rangle = \langle u,d-c_0\rangle = \langle u, a\rangle + \langle u,c-c_0\rangle > \alpha.\qedhere
      \]
    \end{proof}

 \begin{proof}[Proof of Proposition~\ref{prop:blocksubintervals}]
    First assume that the interval \([\xi_p,\xi_q]\) is stationary and
     take \[\C = \bigcup_{\Delta\in \U_p^q}\{\delta_\Delta\}\in \coprod_{\Delta\in \U_p^q} \Delta,~\mathrm{where}~\delta_\Delta~\mathrm{is~a~convex~combination~of}~\beta_{j_t}~\mathrm{and}~\beta_{j_t+1}.\]
     Let \(b\in -\overline{\partial_p^q} \Psi \subset
       \underline{\partial_p^q} \Psi\). There exist \(d_\Delta\in
     {C_p^q}_\Delta\) for each \(\Delta\in \U_p^q\), \(b_d\in
     \overline{\partial_p^q}\Psi\) for each \(d\in
     \S_p^q\), and associated \(\alpha_d\geq 0, \alpha_\Delta\geq 0, \sum_{d\in \S_p^q} \alpha_d
     + \sum_{\Delta\in \U_p^q}\alpha_\Delta=1\) such that
   \begin{align*}
       b&=\sum_{d\in \S_p^q} \alpha_d (d-b_d) + \sum_{\Delta\in \U_p^q}
       \alpha_\Delta d_\Delta.\\
       &=\sum_{d\in \S_p^q} \alpha_d d + \sum_{\Delta\in \U_p^q} \alpha_\Delta \delta_\Delta -
       \sum_{\Delta\in \U_p^q} \alpha_\Delta (\delta_\Delta - d_\Delta) - \sum_{d\in \S_p^q}
       \alpha_d b_d.
     \end{align*}
     Since \(\overline{\partial_p^q}\Psi = \Delta - {C_p^q}_\Delta\) for any
     \(\Delta\in \U_p^q\), and by the convexity of the confined subdifferential,
     \[
       \sum_{\Delta\in \U_p^q} \alpha_\Delta (\delta_\Delta - d_\Delta) + \sum_{d\in
         \S_p^q}\alpha_d b_d \in \overline{\partial_p^q}\Psi.
     \]
     Hence, since \(b\) was arbitrary,
     \(
       -\underline{\partial_p^q} \Psi  \subset \co
       \big\{\S_p^q,\C\big\} -\overline{\partial_p^q}\Psi.
     \)
     Applying lemma~\ref{lem:sum_convex_sets_inclusion} we conclude that
     \( \zero \in \co \big\{\S_p^q,\C\big\}\).

    Now assume formula~\eqref{eq:zero_in_hull_gradients} and let
     \(b\in \overline{\partial_p^q} \Psi\). There exists \(\delta_\Delta\in
       \Delta, \Delta\in \U_p^q\) such that \(b= \sum_{\Delta\in \U_p^q}
       \delta_\Delta\). Then, by assumption there exist \(d\in \co~\S_p^q\) and associated $\alpha_d, \alpha_\Delta (\Delta\in\U_p^q),$ such that $\alpha_d+\sum_{\Delta\in\U_p^q}\alpha_{\Delta}=1,$
     \begin{align*}
       -b &= \alpha_d d + \sum_{\Delta\in \U_p^q} \alpha_{\Delta} \delta_\Delta -
       b\\
       & = \alpha_d(d-b) + \sum_{\Delta\in \U_p^q}
       \alpha_{\Delta}(\delta_\Delta-b)\in \underline{\partial_p^q} \Psi.
     \end{align*}
     Therefore the interval \([\xi_p,\xi_q]\) is stationary.
  \end{proof}

      \begin{corollary}\label{cor:confined_inclusion_equivalent_to_inf_stationarity}
        A spline \(s[X]\) is an inf-stationary solution to the
        problem~\eqref{eq:optimisation_problem} if and only if there exists
        a stationary subinterval.
      \end{corollary}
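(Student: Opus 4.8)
The plan is to reduce everything to Proposition~\ref{prop:blocksubintervals}, which already translates stationarity of a subinterval into the geometric condition~\eqref{eq:zero_in_hull_gradients}. The first observation is that inf-stationarity, i.e.\ condition~\eqref{eq:quasidiffcondition}, is nothing but stationarity of the full interval $[\xi_0,\xi_N]$: when $p=0$ and $q=N$ we have $\S_0^N=\S$ and $\U_0^N=\U$, since every extreme point lies in $[\xi_0,\xi_N]$ and every non-neutral internal knot lies in the open interval $(\xi_0,\xi_N)$. Consequently the confined quasidifferential $[\underline{\partial_0^N}\Psi,\overline{\partial_0^N}\Psi]$ coincides with the full quasidifferential $[\underline{\partial}\Psi,\overline{\partial}\Psi]$, so that~\eqref{eq:confined_inclusion} becomes~\eqref{eq:quasidiffcondition}. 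In particular, if $s[X]$ is inf-stationary then $[\xi_0,\xi_N]$ is itself a stationary subinterval, which settles the ``only if'' direction at once.

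For the ``if'' direction I would start from a stationary subinterval $[\xi_p,\xi_q]$ and show that $[\xi_0,\xi_N]$ is then also stationary. Applying Proposition~\ref{prop:blocksubintervals} to $[\xi_p,\xi_q]$ gives $\zero\in\co\{\S_p^q,\C'\}$ for every $\C'\in\coprod_{\Delta\in\U_p^q}\Delta$; applying it again to $[\xi_0,\xi_N]$, the goal becomes $\zero\in\co\{\S,\C\}$ for every $\C\in\coprod_{\Delta\in\U}\Delta$. The link between the two is the restriction map sending a selection $\C\in\coprod_{\Delta\in\U}\Delta$ to its sub-collection $\C'$ indexed by $\U_p^q\subseteq\U$. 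Since $\S_p^q\subseteq\S$ and $\C'\subseteq\C$, monotonicity of the convex hull under set inclusion yields $\zero\in\co\{\S_p^q,\C'\}\subseteq\co\{\S,\C\}$. As $\C$ is arbitrary, Proposition~\ref{prop:blocksubintervals} applied to the whole interval gives its stationarity, hence~\eqref{eq:quasidiffcondition}.

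The only step requiring care is the identification $\S_0^N=\S$, $\U_0^N=\U$ in the first paragraph; once this is granted, the argument is purely combinatorial. I do not anticipate a genuine obstacle here: all the analytic content---in particular the passage between the inclusion~\eqref{eq:confined_inclusion} and the hull condition~\eqref{eq:zero_in_hull_gradients}, which rests on Lemma~\ref{lem:sum_convex_sets_inclusion}---has already been absorbed into Proposition~\ref{prop:blocksubintervals}. What remains is the elementary observation that enlarging both the finite set $\S$ and the family $\U$ of selections can only enlarge the hulls in~\eqref{eq:zero_in_hull_gradients}, so that a single stationary subinterval propagates stationarity to the whole domain.
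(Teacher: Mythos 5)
Your proof is correct and follows essentially the same route as the paper: identify inf-stationarity with stationarity of the full interval \([\xi_0,\xi_N]\) (where the confined quasidifferential coincides with the full one), and then use Proposition~\ref{prop:blocksubintervals} together with the monotonicity of the convex hull, \(\co\{\S_p^q,\C'\}\subset\co\{\S,\C\}\) for the restricted selection \(\C'\subset\C\), to propagate stationarity of a subinterval to the whole domain. Your explicit verification that \(\S_0^N=\S\) and \(\U_0^N=\U\) only spells out what the paper dismisses as ``by definition.''
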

      \begin{proof}
        By definition, a spline \(s[X]\) is an inf-stationary solution if and
        only if the interval \([\xi_0,\xi_m]\) is stationary. Since for any
        \(\C = \bigcup_{\Delta\in \U}\{\delta_\Delta\}\in \coprod_{\Delta\in \U} \Delta\)
        and any \(0\leq p \leq q\leq m\)
        \[
      \co \Bigg\{\S_p^q,\bigcup_{\Delta\in \U_p^q} \delta_\Delta\Bigg\}
      \subset \co \big\{\S,\C\big\},
        \]
        the proof immediately follows proposition~\ref{prop:blocksubintervals}.
      \end{proof}

  \section{Auxiliary linear transformation and necessary inclusion}\label{sec:chang-vect-basis}

    The quasidifferential of the function~\eqref{eq:main_objective_function} is based on the gradients of
    the polynomials \(P_i(X,t)\):
    \[\nabla_X P_i(X,t) =
    \begin{pmatrix}
      1\\
      \eta_1(t)\\
      \mu_2(t)\\
      \eta_2(t)\\
      \vdots \\
      \mu_{i}(t) \\
      \eta_{i}(t)\\
      \zero
    \end{pmatrix}
    ,\]
    where
    \begin{align}\label{eq:eta_l}
      \eta_l(t) &= (t-\xi_{l-1},\ldots,{(t-\xi_{l-1})}^m)^T,
      l=1,\dots,N,\\
      \mu_1(t)&= 1\label{eq:mu_1}\\
      \mu_l(t) &=- \sum_{j=1}^m
      (m-j+1)a_{l-1\;j}{(t-\xi_{l-1})}^{m-j}\nonumber\\
      &=-a_{l-1\;m}-\sum_{j=1}^{m-1}(m-j+1)a_{l-1\;j}{(t-\xi_{l-1})}^{m-j},
      l=2,\dots,N\nonumber\\
      &=-a_{l-1\;m}-\langle v_l^T , \eta_l(t)\rangle~ l=2,\dots,N,\label{eq:mu_l}
      \end{align}
    where \(v_l=(2a_{l-1\;m-1},3a_{l-1\;m-2}, \dots, ma_{l-1\;1},0).\)

    Since the coefficients of \(\eta_l(t)\) are powers of \((t-\xi_i)\) and
    \(\mu_l(t)\) is a linear combinations of these powers, it is possible to use
    binomial expansion to define a linear transform which sets most coefficients to
    \(\zero\) and only leaves nonzero the elements corresponding to the
    block interval to which \(t\) belongs. We provide details below.

    Define the matrices:
    \[
    V_1=I_{m+1}, \quad
    V_l =
    \begin{pmatrix}
      -1& -v_l  \\
      0 & I_m
    \end{pmatrix} \in \R^{(m+1)\times (m+1)}, l=2,\dots,N,
    \]
    \[
    V =
    \begin{pmatrix}
      V_1     & & \makebox(0,0){\text{\huge0}}      \\
       & \ddots & \\
      \text{\huge0}       & & V_n    \\
    \end{pmatrix} \in \R^{n(m+1)\times n(m+1)}.
    \]
    Then we find that
    \[
    V_l
    \begin{pmatrix}
      \mu_l(t) \\ \eta_l(t)
    \end{pmatrix} =
    \begin{pmatrix}
      a_{l\;m} \\ \eta_l(t)
    \end{pmatrix}
    \]
    and
    \[
    V\nabla_X P_i(X,t) =
    \begin{pmatrix}
      1\\
      \eta_1(t)\\
      a_{2\;m}\\
      \eta_2(t)\\
      \vdots \\
      a_{i\;m} \\
      \eta_{i}(t)\\
      \zero
    \end{pmatrix}.
    \]

    Consider an arbitrary point~\(t\). Let us first notice the following
    equality
    \begin{align*}
      {(t-\xi_{p})}^j =& {(t-\xi_q+\xi_q-\xi_p)}^j\\
      = &\sum_{k=0}^j \binom{k}{j}{(\xi_q-\xi_{p})}^{j-k}{(t - \xi_{q})}^k, j=1,\dots,m.\\
    \end{align*}
   Consider the following vectors: 
    \begin{align*}
   w_{pq}^0 &= \bigg(\frac{a_{pm}}{a_{qm}},0,\cdots,0\bigg),\\
    w_{pq}^j &= \bigg(\frac{1}{a_{qm}}(\xi_q-\xi_p)^j ,
    \binom{1}{j}(\xi_q-\xi_p)^{j-1} , \cdots ,  \binom{j-1}{j}(\xi_q-\xi_p)  , 1
    , 0 ,\cdots ,0\bigg).
     \end{align*}

For each index~\(p\) define the corresponding index~\(q\) in the following way:
\begin{equation}
      \label{eq:closest_nonzero_alm}
      q=k(p) = \min \{l> p: a_{l\;m}\neq 0\}.
    \end{equation}
    That is, for each~\(\xi_p\) we find the index~\(q\) of the next
    nonsmooth (non-neutral) knot.

    Define the matrices \(W_{pr}\in \R^{(m+1)\times (m+1)}\, p=1\dots N, 
      r=p+1\dots N \) as follows:
    \begin{itemize}
    \item   the rows of \(W_{pr}\) are the row vectors~\(w^j_{pr}\)
      (\(j=0\ldots m\)) if~\(r=q=k(p)\)
    \item   \(W_{pr}=\zero\) otherwise.
    \end{itemize}

 If \(p\) and \(q\) satisfy
    equation~\eqref{eq:closest_nonzero_alm}, then
    \[
    \begin{pmatrix}
      a_{p\;m} \\ \eta_p(t)
    \end{pmatrix}= W_{pq}
    \begin{pmatrix}
      a_{q\;m} \\ \eta_q(t)
    \end{pmatrix}.
    \]
    Define also
    \[
    W =
    \begin{pmatrix}
     I_{m+1} & -W_{12} & \cdots  & -W_{1m}  \\
     & \ddots & \ddots & \\
        \text{\huge0}  & & I_{m+1} & -W_{(m-1)m}\\
          & & & I_{m+1}\\
    \end{pmatrix} \in \R^{nm\times nm}.
    \]

    Note that for each index~\(p\) there exists only one index~\(q=k(p),\) such that the corresponding~\(W_{p q}\) is nonzero.


    Both \(V\) and \(W\) are triangular matrices with nonzero elements on the
    diagonal, and so they are both full rank. Furthermore, for any \(t\) we
    have:
    \[
    WV\nabla_X P_l(X,t) =
    \begin{pmatrix}
      \zero\\
      a_{p\;m}\\
      \eta_p(t)\\
      \vdots \\
      a_{q\;m} \\
      \eta_{q}(t)\\
      \zero
    \end{pmatrix},
    \]
    where \(q\) is defined as in \eqref{eq:closest_nonzero_alm} and therefore all the knots between~\(\xi_p\) and~\(\xi_q\) are neutral.

Let us introduce the following definitions.
    \begin{definition}
  Subintervals \([\xi_{i-1},\xi_i], i=1,\dots,n\) are called \emph{unit
    subintervals}. Intervals delimited by non-neutral external knots and whose
  internal knots are neutral are called \emph{block subintervals}.
  \end{definition}

Consider now the linear transformation~\(M=WV\) and define the following sets for
any block subinterval \([\xi_p,\xi_q]\)
    \begin{align}
      \A_p^q~\mathrm{is~the~block~of~nonzero~coordinates~of}~ \bigcup_{d\in \S_p^q} (Md)\\
      {\B_p^q}_\Delta\mathrm{is~the~block~of~nonzero~coordinates~of}~ \bigcup_{d\in \Delta} (Md)
    \end{align}
The following lemma holds.

    \begin{lemma}
      \label{lem:inclusion_after_linear_transform}
      Formula~\eqref{eq:zero_in_hull_gradients} is equivalent to
      \begin{equation}\label{eq:simplified_inclusion}
      \zero \in \co \big\{\A_p^q,\C\big\},
        \forall \C\in \coprod_{\Delta\in \U_p^q} {\B_p^q}_\Delta.
      \end{equation}
    \end{lemma}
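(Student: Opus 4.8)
The plan is to exploit that $M=WV$ is an invertible linear map and that it sends every gradient appearing in the confined quasidifferential into a common coordinate block. Because an invertible linear map commutes with the formation of convex hulls and fixes the origin, the property ``$\zero$ lies in a convex hull'' is unchanged by $M$; discarding the coordinates outside the common block then costs nothing, and the two inclusions become the same statement once the quantifier over $\C$ is tracked through the transformation.

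First I would record the elementary fact that, for an invertible linear map $M$ and any set $S$, one has $\zero\in\co\{S\}$ if and only if $\zero\in\co\{MS\}$: indeed $\co\{MS\}=M\,\co\{S\}$, and $M\zero=\zero$ together with the injectivity of $M$ gives both implications (the same applies to a hull of a finite union such as $\co\{\S_p^q,\C\}$). Applied here, this turns the membership in \eqref{eq:zero_in_hull_gradients} into $\zero\in\co\{M\S_p^q,\,M\C\}$. It also relabels the quantifier: as $\C$ ranges over $\coprod_{\Delta\in\U_p^q}\Delta$, its image $M\C$ ranges over $\coprod_{\Delta\in\U_p^q}M\Delta$, since $M$ is applied separately to each component $\delta_\Delta$ of the selection.

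Next I would invoke the explicit formula $WV\nabla_X P_l(X,t)=(\zero,a_{pm},\eta_p(t),\ldots,a_{qm},\eta_q(t),\zero)^T$ established above. Every element $Md$ with $d\in\S_p^q\cup\bigcup_{\Delta\in\U_p^q}\Delta$ is of this shape, hence all such vectors vanish outside the coordinate block running from $p$ to $q$. Writing $B$ for the projection onto this block, the key observation is that a convex combination of vectors supported in the block vanishes in the ambient space exactly when its image under $B$ vanishes, because the omitted coordinates are identically $\zero$. Therefore $\zero\in\co\{M\S_p^q,M\C\}$ is equivalent to $\zero\in\co\{B\,M\S_p^q,\,B\,M\C\}$, and by their very definition $B\,M\S_p^q=\A_p^q$ and $B\,M\Delta={\B_p^q}_\Delta$.

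The one point needing care, which I regard as the main obstacle, is that the relabelling of the quantifier must survive the projection $B$. For this I would verify that $d\mapsto B(Md)$ is a bijection from each $\Delta$ onto ${\B_p^q}_\Delta$: injectivity holds because $M$ is injective and $Md$ is supported in the block, so $B(Md)$ determines $Md$ and hence $d$; surjectivity is the definition of ${\B_p^q}_\Delta$. Consequently $B\,M\C$ ranges over exactly $\coprod_{\Delta\in\U_p^q}{\B_p^q}_\Delta$ as $\C$ ranges over $\coprod_{\Delta\in\U_p^q}\Delta$. Chaining the three equivalences shows that \eqref{eq:zero_in_hull_gradients} holds for every admissible $\C$ if and only if \eqref{eq:simplified_inclusion} does, which is the claim.
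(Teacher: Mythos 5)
Your proposal is correct and follows essentially the same route as the paper, whose proof simply says to apply the linear transformation $M=WV$ on both sides; you merely make explicit the three ingredients the paper leaves implicit (invertible linear maps preserve membership of $\zero$ in convex hulls, all transformed gradients are supported on the common coordinate block so the projection is harmless, and the selection quantifier over $\coprod_{\Delta\in\U_p^q}\Delta$ transports bijectively to $\coprod_{\Delta\in\U_p^q}{\B_p^q}_\Delta$). No gap to report.
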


    \begin{proof}
      The proof is straightforward: it suffices to apply the linear
      transformation on both sides of the equation.
    \end{proof}

  \section{Identifying stationary subintervals}\label{sec:identify_stationary_subintervals}

    In this section we obtain the main results of this paper. By
    proposition~\ref{cor:confined_inclusion_equivalent_to_inf_stationarity}, we only need to find a stationary interval
    \([\xi_p,\xi_q]\).

 \subsection{Optimality conditions through block subintervals}\label{subsection:block}

  We start with the following lemma.
      \begin{lemma}
        \label{lem:stationary_subinterval}
        Given two intervals \([\xi_l,\xi_p]\) and \([\xi_q,\xi_r]\), such that
        \(p\leq q\) and \(\xi_l,\) \(\xi_p,\) \(\xi_q,\) \(\xi_r\) are not neutral, at least one of these intervals is stationary if and only if
        \begin{equation}
          \label{eq:zero_in_union_convex_hulls}
          \zero \in \co \big\{\S_l^p,\S_q^r,\C_l^p\cup\C_q^r\big\},
          \forall \C_l^p \in \coprod_{\Delta\in \U_l^p} \Delta, \forall \C_r^q \in \coprod_{\Delta\in \U_r^q} \Delta.
        \end{equation}
      \end{lemma}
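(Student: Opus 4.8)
The claim links a geometric stationarity condition on two separate intervals to a single containment of $\zero$ in a convex hull. The natural strategy is to reduce to Proposition~\ref{prop:blocksubintervals}, which already characterises stationarity of a \emph{single} interval $[\xi_p,\xi_q]$ by exactly such a zero-in-hull condition. Since the two intervals $[\xi_l,\xi_p]$ and $[\xi_q,\xi_r]$ share the non-neutral endpoints at $\xi_p$ and $\xi_q$ (with $p\leq q$) and the region between them is irrelevant to either local quasidifferential, the combined data $\S_l^p\cup\S_q^r$ together with $\U_l^p\cup\U_q^r$ behaves like the data of a single (disconnected) system. The plan is therefore to translate the phrase ``at least one of the two intervals is stationary'' into the language of condition~\eqref{eq:zero_in_hull_gradients} applied to each interval, and then show that the \emph{disjunction} over the two intervals is equivalent to the \emph{single joint} condition~\eqref{eq:zero_in_union_convex_hulls}.

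\textbf{Forward direction.} First I would assume that (say) $[\xi_l,\xi_p]$ is stationary. By Proposition~\ref{prop:blocksubintervals} this gives $\zero\in\co\{\S_l^p,\C_l^p\}$ for every selection $\C_l^p\in\coprod_{\Delta\in\U_l^p}\Delta$. Since enlarging the convex hull by adjoining the extra points $\S_q^r$ and $\C_q^r$ can only keep $\zero$ inside, I obtain $\zero\in\co\{\S_l^p,\S_q^r,\C_l^p\cup\C_q^r\}$ for all admissible selections, which is exactly~\eqref{eq:zero_in_union_convex_hulls}. The same monotonicity argument handles the case where $[\xi_q,\xi_r]$ is the stationary interval, so either disjunct implies the joint condition.

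\textbf{Converse direction (the main obstacle).} The hard part is showing that the joint condition~\eqref{eq:zero_in_union_convex_hulls} forces at least one of the two intervals to be stationary on its own. The difficulty is that a priori $\zero$ might sit in the joint hull only by genuinely mixing points drawn from both intervals, so one cannot immediately restrict to a single interval. The key idea I would exploit is the block structure produced by the linear transformation $M=WV$ from Section~\ref{sec:chang-vect-basis}: after applying $M$, the vectors associated with extreme points in $[\xi_l,\xi_p]$ and those in $[\xi_q,\xi_r]$ occupy \emph{disjoint blocks of coordinates} (by Lemma~\ref{lem:inclusion_after_linear_transform}, and because the transformation zeroes out all coordinates outside the relevant block subinterval). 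Under $M$, a convex combination equal to $\zero$ must vanish in each coordinate block separately, so the two intervals decouple. I would then argue that the weight the convex combination places on the two blocks cannot both be strictly positive while simultaneously summing to $\zero$ in each block unless at least one block already contains $\zero$ in its own confined hull; a careful accounting of the coefficients $\alpha_d,\alpha_\Delta$ (as in the proof of Proposition~\ref{prop:blocksubintervals}) shows that if neither interval were stationary one could produce a separating hyperplane in the combined space, contradicting~\eqref{eq:zero_in_union_convex_hulls}. This separation-plus-decoupling step is where the real content lies, and I expect it to rely essentially on the disjointness of supports after the change of basis rather than on any further properties of the splines themselves.
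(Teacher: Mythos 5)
Your proposal is correct and follows essentially the same route as the paper: the forward direction by monotonicity of the convex hull, and the converse by applying the transformation \(M=WV\) so that the vectors of the two intervals occupy disjoint coordinate blocks, forcing the convex combination yielding \(\zero\) to decouple and place \(\zero\) in one interval's confined hull for the relevant selections. The only cosmetic difference is that you argue by contradiction with witnesses for both intervals while the paper fixes a single witness of non-stationarity of \([\xi_q,\xi_r]\) and deduces stationarity of \([\xi_l,\xi_p]\); the separating-hyperplane remark is unnecessary since the block decoupling already closes the argument.
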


      \begin{proof}
        First, since both \(\co \{\S_l^p,\C_l^p\}\) and \(\co
          \{\S_q^r,\C_q^r\}\) are subsets of
        \(\co\{\S_l^p,\S_q^r,\C_l^p\cup\C_q^r\}\),
        the stationarity of either \([\xi_l,\xi_p]\) or \([\xi_q,\xi_r]\)
        implies that inclusion.

        Now, assume that equation~\eqref{eq:zero_in_union_convex_hulls} is true
        and that the interval \([\xi_q,\xi_r]\) is not stationary. Then, we can
        find a set \(\C_q^r\in \coprod_{\Delta\in \U_q^r}\B_\Delta\)
        such that
        \begin{equation}
          \label{eq:xiqr_not_stationary}
          \zero \notin \co \big\{\A_q^r,\C_q^r\big\}.
        \end{equation}
        Applying the linear transformation to the above formula, we obtain that
        for any \(\C_l^p \in \coprod_{\Delta\in \U_l^p} {\B_p^q}_\Delta \), there exists
        \(\lambda\in [0,1]\) such that
        \[
          \begin{pmatrix}
            \zero\\\zero
          \end{pmatrix} \in
          \lambda~\co\Bigg\{
          \begin{pmatrix}
            \A_l^p\\\zero
          \end{pmatrix},
          \bigcup_{\delta\in \C_l^p}
          \begin{pmatrix}
            \delta\\\zero
          \end{pmatrix}
        \Bigg\} +
          (1-\lambda)~\co\Bigg\{
          \begin{pmatrix}
            \zero\\ \A_q^r
          \end{pmatrix},
          \bigcup_{\delta\in \C_q^r}
          \begin{pmatrix}
            \zero\\
            \delta
          \end{pmatrix}\Bigg\}.
        \]
        Formula~\eqref{eq:xiqr_not_stationary} ensures that \(\lambda \neq 0\),
        which implies that
        \( \zero \in \co \big\{\A_l^p,\C_l^p\big\} \)
        and so the interval \([\xi_l,\xi_p]\) is stationary.
      \end{proof}

      \begin{corollary}\label{cor:internal_knots_are_unstable}
        If a stationary interval \([\xi_p,\xi_q]\) contains a
        stable internal knot or an internal knot which is not an extreme point~\(\xi_r\in (\xi_p,\xi_q)\), then either
        \([\xi_p,\xi_r]\) or \([\xi_r,\xi_q]\) is stationary.
      \end{corollary}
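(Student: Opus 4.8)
The plan is to split the stationary interval $[\xi_p,\xi_q]$ at $\xi_r$ and to apply Lemma~\ref{lem:stationary_subinterval} to the two abutting block subintervals $[\xi_p,\xi_r]$ and $[\xi_r,\xi_q]$; in the notation of that lemma this is the case where the two inner indices coincide at $r$, so its hypothesis $p\le q$ holds with equality. For the lemma to apply I first check that $\xi_p$, $\xi_r$ and $\xi_q$ are non-neutral. The endpoints $\xi_p,\xi_q$ are non-neutral since $[\xi_p,\xi_q]$ is a block subinterval, and $\xi_r$ is non-neutral in both cases of the hypothesis: a stable knot is by definition a max- or min-knot, and the non-extreme knot to be peeled off is likewise non-neutral (a neutral knot sits in the interior of a block subinterval and does not delimit one, so splitting there is not meaningful).

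Next I would record how the confined data split across $\xi_r$. Because $\xi_r$ is stable or non-extreme it is not unstable, so $\Delta_{j_r}\notin\U_p^q$ and hence $\U_p^q=\U_p^r\sqcup\U_r^q$; consequently every selection $\C\in\coprod_{\Delta\in\U_p^q}\Delta$ decomposes uniquely as $\C=\C_p^r\cup\C_r^q$ with $\C_p^r\in\coprod_{\Delta\in\U_p^r}\Delta$ and $\C_r^q\in\coprod_{\Delta\in\U_r^q}\Delta$, and this correspondence is a bijection. Since every extreme point of $[\xi_p,\xi_q]$ contributing to $\S_p^q$ lies in $[\xi_p,\xi_r]$ or in $[\xi_r,\xi_q]$, one also has $\S_p^q=\S_p^r\cup\S_r^q$.

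With these identifications the stationarity of $[\xi_p,\xi_q]$ transfers at once. By Proposition~\ref{prop:blocksubintervals}, $\zero\in\co\{\S_p^q,\C\}$ for every $\C\in\coprod_{\Delta\in\U_p^q}\Delta$; rewriting the data as above gives $\co\{\S_p^q,\C\}=\co\{\S_p^r,\S_r^q,\C_p^r\cup\C_r^q\}$, so $\zero\in\co\{\S_p^r,\S_r^q,\C_p^r\cup\C_r^q\}$ for all $\C_p^r$ and $\C_r^q$. This is exactly hypothesis~\eqref{eq:zero_in_union_convex_hulls} of Lemma~\ref{lem:stationary_subinterval} for the pair $[\xi_p,\xi_r]$, $[\xi_r,\xi_q]$, and the lemma then yields that at least one of these subintervals is stationary, which is the claim.

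The set-theoretic decompositions of $\S$ and $\U$ and the manipulation of convex hulls are routine. The point that needs the most care --- and where I expect the argument to be genuinely delicate --- is the non-neutrality of the split knot $\xi_r$, since Lemma~\ref{lem:stationary_subinterval} relies on the transformation $M=WV$ separating the two subintervals into disjoint coordinate blocks, and this separation only occurs when the shared endpoint is a block endpoint. This is precisely why the statement restricts to stable or non-extreme internal knots: among the non-neutral internal knots these are exactly the ones that are not unstable, and such a knot can be split off while leaving stationarity on one of the two sides.
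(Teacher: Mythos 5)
Your argument is essentially the paper's own proof: the paper likewise notes that because \(\xi_r\) is stable or not an extreme point one has \(\S_p^q=\S_p^r\cup\S_r^q\) and \(\U_p^q=\U_p^r\cup\U_r^q\), and then applies Lemma~\ref{lem:stationary_subinterval} to the pair \([\xi_p,\xi_r]\), \([\xi_r,\xi_q]\); you merely spell out the selection bijection and convex-hull identity that the paper calls ``straightforward''. The only slight overreach is your assertion that \(\xi_p,\xi_q\) are non-neutral ``since \([\xi_p,\xi_q]\) is a block subinterval'' --- the corollary only assumes a stationary interval --- but the paper's two-line proof passes over the non-neutrality hypothesis of the lemma in exactly the same way, so this does not set your argument apart from it.
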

      \begin{proof}
        If the knot \(\xi_r\) is stable or not an extreme point, then \(\S_p^q =
          \S_p^r \cup \S_r^q\) and \(\U_p^q = \U_p^r \cup \U_r^q\).
        The application of lemma~\ref{lem:stationary_subinterval} is
        straightforward.
      \end{proof}

\subsection{Characterization using alternating extreme points}\label{subsec:optim_through_alternating points}

We can now proceed with the proof of theorem~\ref{thm:main_characterization}.
\begin{proof}[Proof of theorem~\ref{thm:main_characterization}]
  We know from
  proposition~\ref{cor:confined_inclusion_equivalent_to_inf_stationarity} that
  a spline is inf-stationary if and only if there exists a stationary
  subinterval. Let \([\xi_p,\xi_q]\) be such a subinterval not containing any
  strict stationary subintervals. Equivalently, by
  proposition~\ref{prop:blocksubintervals} and applying the linear
  transformation described in section~\ref{sec:chang-vect-basis}, for any
  \(\C \in \coprod_{\Delta\in \U_p^q} \Delta\)
  \begin{equation}\label{eq:zero_conv_hull_main}
    \zero_{N(m+1)} \in \co \big\{Md: d\in \S_p^q\cup \C\big\}.
  \end{equation}
Let $\xi_{j_1},\dots,\xi_{j_{k-1}}$ be the nonsmooth knots located in $(\xi_p,\xi_q)$, that is  $\xi_{j_1},\dots,\xi_{j_{k-1}}$ separate the corresponding block subintervals.
  Noticing that the coefficients of all the vectors in the above set
  not corresponding to interval \([\xi_p,\xi_q]\) are \(0\), one can see that 
\begin{itemize}
\item $\zero$ from~(\ref{eq:zero_conv_hull_main}) can be formed without $\xi_p$ and $\xi_q$ if they are unstable;
\item for every block subinterval $[\xi_{j_i},\xi_{j_i+k_i}],$ containing $k_i$ unit subintervals
(ignoring rows with all zeros):
  \begin{equation}\label{eq:zero_in_Tblocksimple}
    \begin{aligned}
    \zero_{mk_i+1} \in \alpha_{j_i}\co &\left\{
        \begin{matrix}
          \sigma(\xi_{j_i}) (1-\lambda_{j_i})
          \begin{pmatrix} 1\\ \eta_{j_i+1}(\xi_{j_i})\\  \zero\\ \vdots
            \\
            \zero\end{pmatrix},
        &
          \sigma(t)
          \begin{pmatrix} 1\\ \eta_{j_i+1}(t)\\  \zero\\ \vdots
            \\
            \zero\end{pmatrix},
        \cdots
          \\
          &
          t\in \S_{j_i}^{j_i+1}
      \end{matrix}\right.\\
    &\left.
      \begin{matrix}
           \sigma(t)
          \begin{pmatrix} 1\\ \eta_{j_i+1}(t)\\  \eta_{j_i+2}(t)\\ \vdots
            \\
            \eta_{j_i+k_i}(t)\end{pmatrix},
        &
           \sigma(\xi_{j_i+k_i}) \lambda_{j_i+1}
          \begin{pmatrix} 1\\ \eta_{j_i+1}(\xi_{j_i+k_i})\\  \eta_{j_i+2}(\xi_{j_i+k_i})\\ \vdots
            \\
            \eta_{i+k_i}(\xi_{j_i+k_i})\end{pmatrix}
          \\
          t\in \S_{j_{i}+k_i}^{j_{i}+k_i-1}
        \end{matrix}
      \right\}
      \end{aligned},
  \end{equation}
\end{itemize}
where $ \sigma(t)=\sign (s(t)-f(t)),$ that is the sign of the deviation function of $f(t)$ from spline $s(t)$,  $\lambda_{j_1}=1,~\lambda_{j_k}=0$.

The system~\eqref{eq:zero_in_Tblocksimple} is equivalent to solving a linear
system involving a block triangular matrix where each block is a Vandermonde
matrix. It was studied in~\cite{tarashnin96}
(see also~\cite{sukhorukovaoptimalityfixed} where a similar system was
considered). There it is proved that the interval \([\xi_{j_i},\xi_{j_{i+1}}]\)
  contains a sequence of \(\tau_i\) unit subintervals $(\tau_i\leq k_i)$ with at least \(m\tau_i+2\)
  alternating extreme points and none of these points coincides with any of the internal knots of the block of $\tau_i$ unit subintervals.

Consider the system of all the alternating extreme points, taking the same point twice if this point appears in both adjacent block subintervals. The total number of such points is 
$$2j_k+m\sum_{i=1}^{j_k}\tau_i.$$ 

If each unstable internal knot of $[\xi_p,\xi_q]$ is represented twice in this sequence, then (since $j_k=l+1$) the total number of distinct alternating extreme points is exactly $m(q-p)+2+l,$ where $l$ is the number of non-neutral (by corollary~\ref{cor:internal_knots_are_unstable} these knots have to be unstable) internal knots inside of  $[\xi_p,\xi_q]$. In this case all the conditions of our main theorem are satisfied.

If there exists an unstable internal knot $\xi_{j_r}$ of $[\xi_p,\xi_q]$, which is not included twice, that is, not included to the subsequence of alternating extreme points for one or both adjacent block subintervals, then $\zero$ from~(\ref{eq:zero_conv_hull_main}) can  be formed on a shorter interval. Namely, if the block subinterval $[\xi_{j_{r-1}},\xi_{j_r}]$ does not include $\xi_{j_r}$ then  $\zero$ from~(\ref{eq:zero_conv_hull_main}) can  be formed by $[\xi_p,\xi_{j_r}].$ 
This proves the theorem.
\end{proof}

  \section{Discussion and conclusive remarks}\label{sec:disc-concl-remarks}

  In this paper we obtained a characterization theorem for the approximation of
  continuous functions by continuous polynomial splines with free
  knots. This result is equivalent to the Demyanov-Rubinov stationarity.

  Although our result does not address exactly the same problem as existing
  characterisation theorem for polynomial splines with free knots, several
  observations can be made:
  \begin{enumerate}
    \item In~\cite{Nurnberger892}, the number of alternating extreme points
      depends on the multiplicity of the knots. In particular, knots of
      multiplicity higher than \(2\) decrease the number of extreme points.
      This may result in suboptimal solutions, as illustrated in 
      example~\ref{ex:too_smooth}.
      In contrast, the results provided in this paper
      demonstrate that the characterization of continuous splines only depends
      on whether the spline is differentiable at the knot or not (that is its
      multiplicity is 0 or positive). This eliminates many splines, such as
      the one shown in that example.
    \item The present characterization theorem also requires the first and last
      extreme points of the sequence to not be unstable. This improvement can
      have an interesting use in an algorithm for constructing a best
      polynomial spline approximation. Indeed, as there are a variety of
      efficient algorithms for best approximation by polynomial splines with
      fixed knots, it is natural to develop algorithms alternating between
      finding the polynomial coefficients and the knots (such
      as~\cite{Meinardus89}). Our results offer an interesting strategy to
      improve on local minimisers by moving knots to coincide with the first
      and last extreme points of the stationary interval. This approach will be
      studied in details in our future work.
  \end{enumerate}

    \bibliographystyle{amsplain}
    \bibliography{julien}

\providecommand{\bysame}{\leavevmode\hbox to3em{\hrulefill}\thinspace}
\providecommand{\MR}{\relax\ifhmode\unskip\space\fi MR }
\providecommand{\MRhref}[2]{%
  \href{http://www.ams.org/mathscinet-getitem?mr=#1}{#2}
}
\providecommand{\href}[2]{#2}
\begin{thebibliography}{10}

\bibitem{FreeKnotsOpenProblem96}
P.~Borwein, I.~Daubechies, V.~Totik, and G.~N\"urnberger, \emph{Bivariate
  segment approximation and free knot splines: Research problems 96-4},
  Constructive Approximation \textbf{12} (1996), no.~4, 555--558.

\bibitem{dr95}
V.F. Demyanov and A.M. Rubinov, \emph{Constructive nonsmooth analysis}, Peter
  Lang, Frankfurt am Main, 1995.

\bibitem{Dr00}
V.F Demyanov and A.M. Rubinov (eds.), \emph{Quasidifferentiability and related
  topics}, Nonconvex Optimization and Its Applications, vol.~43, Kluwer
  Academic, Dordrecht/Boston/London, 2000.

\bibitem{luderer86}
Bernd Luderer, \emph{On the quasidifferential of a continual maximum function},
  Optimization \textbf{17} (1986), no.~4, 447--452.

\bibitem{Meinardus89}
G.~Meinardus, G.~N\"{u}rnberger, M.~Sommer, and H.~Strauss, \emph{Algorithms
  for piecewise polynomials and splines with free knots}, Mathematics of
  Computation \textbf{53} (1989), 235--247.

\bibitem{Mulansky92}
B.~Mulansky, \emph{Chebyshev approximation by spline functions with free
  knots}, IMA Journal of Numerical Analysis \textbf{12} (1992), 95--105.

\bibitem{nurnberger}
G.~N\"urnberger, \emph{Approximation by spline functions}, Springer-Verlag,
  1989.

\bibitem{nurnberger85}
G.~N\"{u}rnberger, L.~Schumaker, M.~Sommer, and H.~Strauss, \emph{Approximation
  by generalized splines}, Journal of Mathematical Analysis and Applications
  \textbf{108} (1985), 466--494.

\bibitem{Nurnberger892}
\bysame, \emph{Uniform approximation by generalized splines with free knots},
  Journal of Approximation Theory \textbf{59} (1989), no.~2, 150--169.

\bibitem{rice67}
J.~Rice, \emph{Characterization of chebyshev approximation by splines}, SIAM
  Journal on Numerical Analysis \textbf{4} (1967), no.~4, 557--567.

\bibitem{Schumaker68}
L.~Schumaker, \emph{Uniform approximation by chebyshev spline functions. {II}:
  free knots}, SIAM Journal of Numerical Analysis \textbf{5} (1968), 647--656.

\bibitem{sukhorukovaoptimalityfixed}
Nadezda Sukhorukova, \emph{Uniform approximation by the highest defect
  continuous polynomial splines: necessary and sufficient optimality conditions
  and their generalisations}, Journal of Optimization Theory and Applications
  \textbf{147} (2010), no.~2, 378--394.

\bibitem{sukhorukovaalgorithmfixed}
\bysame, \emph{Vall\'ee poussin theorem and remez algorithm in the case of
  generalised degree polynomial spline approximation}, Pacific Journal of
  Optimization \textbf{6} (2010), no.~1, 103--114.

\bibitem{tarashnin96}
M.G. Tarashnin, \emph{Application of the theory of quasidifferentials to
  solving approximation problems}, Ph.D. thesis, St-Petersburg State
  University, 1996, 119 pp. (in Russian).

\end{thebibliography}

\end{document}